\newtheorem{theorem}{Theorem}[section]
\newtheorem{lemma}[theorem]{Lemma}
\newtheorem{proposition}[theorem]{Proposition}
\newtheorem{corollary}[theorem]{Corollary}
\theoremstyle{definition}
\newtheorem{definition}[theorem]{Definition}
\newtheorem{example}[theorem]{Example}
\newtheorem{conjecture}[theorem]{Conjecture}
\theoremstyle{remark}
\newtheorem{remark}[theorem]{Remark}
\numberwithin{equation}{section}
\begin{document}

\title{Generalized modular equations and
the CM values of Hauptmoduln}

%    Information for first author
\author{Kazuki Tomiyama}
%    Address of record for the research reported here
\address{Department of Mathematics Faculty of Science and Engineering Waseda University
 3-4-1 Okubo, Shinjuku-ku, Tokyo 169-8555 JAPAN}
\email{tomiyama@suou.waseda.jp}
%    \thanks will become a 1st page footnote.

%    General info
\subjclass[2025]{Primary 54C40, 14E20; Secondary 46E25, 20C20}

\date{December 12, 2025 and, in revised form, .}

\keywords{Hauptmodul, singular moduli, modular equations, moonshine}

\begin{abstract}
Monstrous moonshine relates the representation of the Monster finite sporadic simple group to the distinguished modular functions, called Hauptmoduln.  
Chen-Yui~\cite{Chen-Yui} showed that the CM values of Hauptmoduln which appeare in monstrous moonshine (but not all) are algebraic integers, which is similar to the singular moduli of the $j$-function. In this paper, we generalize this result to Hauptmoduln whose $q$-coefficients are cyclotomic integers. A main idea for our proof is the use of generalized modular equations for Hauptmoduln, which was introduced by Cummins-Gannon~\cite{Cummins-Gannon} in the study of monstrous moonshine.
As an application, we show that if a formal $q$-series satisfies the special combinatoric property called complete replicability, its CM values are algebraic integers, without assuming the modular invariance. 

\end{abstract}

\maketitle

\tableofcontents

\section{Introduction}
Throughout the present paper, $z$ denotes a variable in the upper half plane $\mathbb{H}$, and $q = e^{2\pi i z}$.
\subsection{The $j$-function and Hauptmoduln}
The elliptic modular $j$-function is a modular function for the modular group $SL_2(\mathbb{Z})$, and it plays a crucial role in various areas of mathematics, such as number theory, the theory of Picard-Fuchs differential equations and the theory of complex multiplication (CM) of elliptic curves :
\[
  j(z) := 1728 \frac{g_2(z)^3}{g_2(z)^3-27g_3(z)^2}   
          = q^{-1} + 744 + 196884q +21493760q^2 + 864299970q^3 +\cdots ,
\]
where $g_2(z)= 60\cdot \sum_{(m,n)\neq (0,0)} (m+nz)^{-4},~ g_3(z)= 140\cdot \sum_{(m,n)\neq (0,0)} (m+nz)^{-6}$.
As the compactification of the Riemann surface $SL_2(\mathbb{Z})\backslash\mathbb{H}$ has genus-zero, 
the modular function field of $SL_2(\mathbb{Z})$ is generated by one element over $\mathbb{C}$, and indeed the $j$-function is a generator 
of this field.
In other words,
any modular function for $SL_2(\mathbb{Z})$ can be written as a rational function of $j$.\\

The $j$-function is a prototype of special modular functions, called \emph{Hauptmoduln}, and they are deeply related to an outstanding sporadic finite simple group.
Let $\Gamma\subset SL_2(\mathbb{R})$ be a  Fuchsian group of the first kind,
and we assume that $\Gamma$ contains the translation $z\mapsto z+1$ so that 
the modular functions for $\Gamma$ have the \emph{$q$-expansion~(Fourier expansion)} of the form
\[
\sum_{m\gg \infty}^\infty a_m q^m .
\]
We denote by $X_\Gamma$ the compactification of the Riemann surface $\Gamma\backslash\mathbb{H}$.
This group $\Gamma$ is said to be \emph{genus-zero} if $X_\Gamma$ has genus-zero as a compact Riemann surface.
If $\Gamma$ has genus-zero, there exist modular functions which generate the modular function field of $\Gamma$. 
Although the choice of the generators is not unique, we can take a generator $h_\Gamma(z)$ uniquely so that it has a $q$-expansion of the form 
\[
h_\Gamma(z) = q^{-1} + \sum_{n=1}^\infty a_n q^n .
\]
Such a generator $h_\Gamma$ is called (normalized) \emph{Hauptmodul} for a genus-zero group $\Gamma$.
For example, $J(z):= j(z)-744 = q^{-1}+O(q)$ is the Hauptmodul for $SL_2(\mathbb{Z})$.\\

On the other hand, 
it is known that the
special values of the $j$-function at imaginary quadratic numbers (\emph{CM points}) are algebraic integers (cf.~\cite{Cox},\cite{Lang elliptic}). 
These CM values are called \emph{singular moduli}, and have rich arithmetic properties. 
To give an important example, the singular moduli generate the ring class fields over certain imaginary quadratic number fields. 
This is a beautiful generalization of the celebrated Kronecker-Weber theorem, which asserts that
the maximal abelian extension of $\mathbb{Q}$ is generated by special values of the complex exponential function.\\

The aim of this paper is to study the arithmetic natures of the CM values of Hauptmoduln.
To introduce the previous researches on this topic, we will overview the theory of \emph{monstrous moonshine}, where certain Hauptmoduln play a role.

\subsection{Monstrous moonshine}
\emph{Moonshine} is a surprising relation between certain modular objects (e.g. Hauptmoduln, modular forms and mock modular forms) and distinguished representations of certain finite groups (cf.~\cite{Moonshine}).\\

According to the classification of finite simple groups (cf.~\cite{ATLAS}),
there are twenty six sporadic finite simple groups.
The largest one is called the \emph{Monster group} $\mathbb{M}$, and it contains other twenty sporadic groups as its subquotients.
\emph{Monstrous moonshine} (cf.~\cite{Borcherds},\cite{Conway-Norton}) 
asserts that there exists a graded infinite dimensional $\mathbb{M}$-module $V^\natural= \oplus_{n=-1}^\infty V_n^\natural$,
today called the \emph{moonshine vertex operator algebra} (cf.~\cite{F-L-M}), such that for each element $g\in\mathbb{M}$, 
the graded-trace function
\[
T_g(z) := \sum_{n=-1}^\infty tr(g|V_n^\natural) q^n 
\]
is a Hauptmodul for some genus-zero subgroup of $SL_2(\mathbb{R})$.
These $T_g$ are called the \emph{McKay-Thompson series} in monstrous moonshine, and the one corresponding to the identity element $e$ of the Monster is $J(z)$, a Hauptmodul for $SL_2(\mathbb{Z})$. That is,
\begin{align*}
  T_e(z) &= \sum _{n=-1}^\infty \text{dim}(V^\natural_n) q^n    
          = q^{-1}  + 196884q +21493760q^2 + 864299970q^3 +\cdots  \\
          &= j(z) -744 \; =\;J(z).\\
\end{align*}

In addition, it is known that all the McKay-Thompson series hold certain combinatoric property, called \emph{complete replicability},
which
we will study in Section \ref{replicable}.\\

Chen-Yui~\cite{Chen-Yui} studied the CM values of some McKay-Thompson series in monstrous moonshine, and showed that these CM values
have arithmetic properties which are similar to the singular moduli of the $j$-function.
We will give an overview of these results.\\

Let $\Gamma_g\subset SL_2(\mathbb{R})$ be a genus-zero 
subgroup corresponding to an element $g\in\mathbb{M}$ in monstrous moonshine.
It is known that $\Gamma_g$ contains a congruence subgroup
\[
\Gamma_0(N) =  \biggl \{\begin{pmatrix}
  a & b\\
  c & d
\end{pmatrix}\in SL_2(\mathbb{Z}) \:|\: c\equiv0 \mod N \biggr \}
\]
for some $N\geq1$, and $N$ is divisible by the order of $g\in\mathbb{M}$.
A McKay-Thompson series $T_g(z)$
is called \emph{fundamental} if (the smallest) $N$ is exactly the order of $g\in\mathbb{M}$.

\begin{theorem}[{\cite[Theorem~3.4]{Chen-Yui}}]
  Let $T_g$ be a fundamental McKay-Thompson series, and suppose that the invariance group $\Gamma_g$ contains $\Gamma_0(N)$. Let $\tau\in\mathbb{H}$ be a root of $az^2+bz+c=0~(a,b,c\in\mathbb{Z}, (a,N)=1)$.
  Then the CM value $T_g(\tau)$ is an algebraic integer.
\end{theorem}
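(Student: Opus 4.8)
The plan is to reduce the integrality of $T_g(\tau)$ to the classical integrality of singular moduli $j(\tau)$ by exploiting the modular equation relating $T_g$ to $j$ on $\Gamma_0(N)$. First I would recall that, since $\Gamma_g \supseteq \Gamma_0(N)$, the Hauptmodul $T_g$ is a modular function for $\Gamma_0(N)$, and hence is a rational function of $j(z)$ and $j(Nz)$; more usefully, $T_g$ and $j(z)$ together generate a function field of finite degree over $\mathbb{C}(j(z))$. The key classical input (a generalized modular equation in the sense of Cummins–Gannon, which the paper will develop) is a polynomial relation
\[
\Phi_N\bigl(T_g(z),\, j(z)\bigr) = 0,
\]
where $\Phi_N(X,Y) \in \mathbb{Z}[X,Y]$ is \emph{monic in $X$}. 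The monic-in-$X$, integral-coefficient nature of this relation is exactly the structural fact one needs: evaluating at $z=\tau$ gives that $T_g(\tau)$ is integral over the ring $\mathbb{Z}[j(\tau)]$.

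The main steps, in order: (1) Establish that $\Phi_N(X,Y)\in\mathbb{Z}[X,Y]$ and is monic in $X$. This is where the theory of modular equations enters — the coefficients of $\Phi_N$, viewed as a polynomial in $X$, are symmetric functions of the conjugates $T_g \circ \sigma$ over the coset representatives $\sigma$ of $\Gamma_0(N)$ in (a suitable group between $\Gamma_0(N)$ and $SL_2(\mathbb{Z})$ or its normalizer), and are therefore modular functions for $SL_2(\mathbb{Z})$ with integral $q$-expansions; hence they are polynomials in $j$ with coefficients whose integrality follows from the $q$-expansion principle, once one checks that all the relevant $q$-coefficients of $T_g$ are rational integers (true for fundamental $T_g$ by the moonshine/replicability structure). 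Monicity in $X$ comes from normalizing $T_g = q^{-1}+O(q)$: the leading behavior forces the product $\prod_\sigma (X - T_g\circ\sigma)$ to be monic. (2) Use the hypothesis $(a,N)=1$ to ensure $\tau$ is a CM point "adapted" to $\Gamma_0(N)$, so that the modular equation can actually be evaluated at $\tau$ without the leading coefficient degenerating — i.e. the specialization $\Phi_N(X, j(\tau))$ remains monic of the right degree. This coprimality condition is precisely what guarantees that the point $\tau$ is not a cusp or a ramification point spoiling the integral relation. (3) Invoke the classical theorem that $j(\tau)$ is an algebraic integer (from \cite{Cox}, \cite{Lang elliptic}). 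Combining (1)–(3): $T_g(\tau)$ is integral over $\mathbb{Z}[j(\tau)]$, and $j(\tau)$ is integral over $\mathbb{Z}$; integrality is transitive, so $T_g(\tau)$ is an algebraic integer.

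The hard part will be step (1) together with step (2): making the modular equation $\Phi_N$ genuinely have \emph{integer} (not merely algebraic, or merely rational) coefficients, and ensuring it stays monic in $X$ after specializing $Y = j(\tau)$. The integrality of coefficients hinges on the fact that a fundamental McKay–Thompson series has rational-integer $q$-coefficients — this is where "fundamental" is used, and in the generalization later in the paper this is exactly the hypothesis that will be relaxed to cyclotomic integers. The monicity-after-specialization is a subtler geometric point: one must track how the cusps of $X_{\Gamma_0(N)}$ lying over the cusp $\infty$ of $X_{SL_2(\mathbb{Z})}$ behave, and show that the condition $(a,N)=1$ places $\tau$ in the "good" locus where the finite map $X_{\Gamma_0(N)}\to X_{SL_2(\mathbb{Z})}$ (given by $j$) is such that $T_g$ restricted to the fibre over $j(\tau)$ takes finite values that are roots of a monic integral polynomial. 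I would handle this by a direct analysis of the Fourier expansions of $T_g$ at the various cusps, or equivalently by working with the Hecke-style correspondence underlying $\Phi_N$ and checking that the Atkin–Lehner/coset structure respects $\mathbb{Z}$-integrality; the $q$-expansion principle then does the rest.
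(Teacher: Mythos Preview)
Your plan is not Chen--Yui's method, and your step~(2) misidentifies the role of the hypothesis $(a,N)=1$. Chen--Yui (and this paper, in its generalization) do not relate $T_g$ to $j$ at all: they build, for integers $n$ coprime to $N$, a polynomial $F_n(X,Y)\in\mathbb{Z}[X,Y]$ with $F_n\bigl(T_g(z),T_g(\omega z)\bigr)=0$ for every $\omega\in\Omega(n)$ --- a modular equation relating $T_g$ to \emph{itself}. The CM point enters by producing a primitive integer matrix $\rho$ with $\rho\tau=\tau$, $\det\rho=n$, lower-left entry divisible by $N$, and upper-left entry coprime to $N$; factoring $\rho=\gamma\omega$ with $\gamma\in\Gamma_0(N)$ and $\omega\in\Omega(n)$ yields $T_g(\tau)=T_g(\omega\tau)$, so $T_g(\tau)$ is a root of the one-variable polynomial $F_n(X,X)$, whose leading coefficient one computes explicitly. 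The condition $(a,N)=1$ is precisely what allows one to construct such a $\rho$ with $(n,N)=1$; it has nothing to do with specializing a two-variable polynomial.

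Your route via a monic relation $\Phi(T_g,j)=0$ is essentially the ``other methods'' the paper explicitly contrasts with Chen--Yui (Cox--McKay--Stevenhagen, Koo--Shin). It can be made to work, but your step~(2) is vacuous: $\Phi(X,Y)=\prod_\sigma(X-T_g\circ\sigma)$ is monic in $X$ identically, so $\Phi(X,j(\tau))$ is monic for \emph{every} $\tau\in\mathbb{H}$, and $(a,N)=1$ is never used. If your argument goes through it proves the theorem without that hypothesis --- which those later references indeed achieve --- so you have not located where Chen--Yui actually use it. The genuine difficulty in your approach sits in step~(1): to get $\Phi\in\mathbb{Z}[X,Y]$ you must control the $q$-expansions of $T_g$ at \emph{all} cusps of $X_0(N)$, not just at $\infty$, or else invoke a non-trivial integrality statement over $\mathbb{Z}[j,j(N\,\cdot\,)]$; this is not as immediate as ``the $q$-expansion principle then does the rest.''
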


The key point of their proof is constructing the \emph{modular equations} (see Section 2), by using the fundamental property of the McKay-Thompson series in question.
After the work of \cite{Chen-Yui}, 
generalizations of this result to Hauptmoduln with rational $q$-coefficients~(cf.~\cite{principal moduli},\cite{Koo Dong Shin}),
or to special values at any elliptic fixed points (cf.~\cite{Jorgenson}) have been studied,
but by other methods than that of \cite{Chen-Yui}.
In the present paper, we will consider Hauptmoduln whose coefficients are not necessarily rational numbers.\\

\subsection{Main results}

In the present paper, we extend the methods in \cite{Chen-Yui} to study CM values of Hauptmoduln with cyclotomic integer $q$-coefficients.  
The following are the main results of this paper :

\begin{theorem}[Theorem~\ref{main thm}]
   Let $\Gamma$ be a discrete subgroup of $SL_2(\mathbb{R})$ of genus-zero,
   and suppose that $\Gamma$ contains a congruence subgroup $\Gamma_0(N)$ for some $N\geq1$.
   We denote $h$ by the Hauptmodul for genus-zero group $\Gamma$,
   and suppose that all the $q$-coefficients of $h$ are contained in the ring of integers of some cyclotomoic field. 
  Let $\tau\in\mathbb{H}$ be a root of $az^2+bz+c=0~(a,b,c\in\mathbb{Z}, (a,N)=1)$, then the CM value $h(\tau)$ is an algebraic integer.
\end{theorem}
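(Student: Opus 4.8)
The plan is to mimic the Chen--Yui strategy but at the level of the congruence subgroup $\Gamma_0(N)$, using the classical affine model $X_0(N)\to X(1)$ and the theory of generalized modular equations from Cummins--Gannon to control integrality. First I would exploit the hypothesis $\Gamma_0(N)\subset\Gamma$: since $h$ is a modular function for $\Gamma$, it is in particular a modular function for $\Gamma_0(N)$, so $h$ lies in the function field of $X_0(N)$, which is generated over $\mathbb{C}(j)$ by $j_N(z):=j(Nz)$. The key structural input is that the modular curve $X_0(N)$ has a model over $\mathbb{Z}$ whose cusps and Hecke correspondences are defined over cyclotomic fields, and that $j,j_N$ satisfy the modular equation $\Phi_N(j(z),j(Nz))=0$ with $\Phi_N\in\mathbb{Z}[X,Y]$, monic of degree $\psi(N)=[\,SL_2(\mathbb{Z}):\Gamma_0(N)\,]$ in each variable after the standard normalization. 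From these one knows that $j(Nz)$ is integral over $\mathbb{Z}[j]$.

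The heart of the argument is to show that $h$ itself is integral over $\mathbb{Z}[j]$, or more precisely over the ring $\mathcal{O}_K[j]$ where $K$ is the cyclotomic field containing the $q$-coefficients of $h$. For this I would invoke the generalized modular equation for $h$: the Hecke-type correspondences $T_p$ (for primes $p$ with suitable congruence conditions) act on the function field of $\Gamma$, and following Cummins--Gannon one obtains a polynomial relation $\Psi_p(h(z),h(pz))=0$ (or a relation among $h(z)$ and its translates by $SL_2(\mathbb{Z})$-cosets) whose coefficients are \emph{polynomials in $h$ with cyclotomic-integer coefficients}, with leading coefficient a unit. The crucial normalization fact, exactly as in the classical case, is that because $h=q^{-1}+O(q)$ has a pole of order one at the cusp $\infty$ and the replicability/modular-equation machinery forces the relevant coefficients to be $q$-series with integral cyclotomic coefficients, the resulting relation is \emph{monic}. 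Concretely, I expect to produce a monic polynomial $P(X)\in\mathcal{O}_K[j][X]$ with $P(h)=0$, obtained by combining $\Phi_N$ with the expression of $h$ as a rational function of $j$ and $j_N$ and clearing denominators using the monicity at the cusp. This shows $h$ is integral over $\mathcal{O}_K[j]$.

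Once integrality of $h$ over $\mathcal{O}_K[j]$ is established, the arithmetic input about CM points finishes the proof. For $\tau$ a root of $az^2+bz+c=0$ with $(a,N)=1$, the point $(\tau \bmod \Gamma_0(N))$ on $X_0(N)$ is a CM point, and the classical theory of complex multiplication (as in Cox or Lang) gives that $j(\tau)$ is an algebraic integer; the condition $(a,N)=1$ guarantees that $N\tau$ is also a CM point for the same order, so $j(N\tau)=j_N(\tau)$ is an algebraic integer as well, hence so is any element of $\mathcal{O}_K[j(\tau),j_N(\tau)]$, and in particular the value $h(\tau)$, which is integral over this ring. Evaluating the monic relation $P(X)\in\mathcal{O}_K[j][X]$ at $z=\tau$ exhibits $h(\tau)$ as a root of a monic polynomial with algebraic-integer coefficients, so $h(\tau)$ is an algebraic integer.

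The main obstacle I anticipate is the second step: producing the \emph{monic} integral relation for $h$ over $\mathcal{O}_K[j]$. In the Chen--Yui setting the fundamentality hypothesis was exactly what made the relevant modular equation monic with integer coefficients; here I must instead extract monicity from the generalized modular equations of Cummins--Gannon together with the assumption that the $q$-coefficients of $h$ are cyclotomic integers, and I will need to check carefully that no denominators with non-cyclotomic or non-integral primes are introduced when passing from the correspondence on $X_0(N)$ to a polynomial identity in $h$ and $j$. A secondary technical point is keeping track of the field of definition: the translates $h(\gamma z)$ for $\gamma$ ranging over coset representatives of $\Gamma_0(N)$ in $SL_2(\mathbb{Z})$ (or over the Hecke orbit) may have $q$-expansions involving roots of unity, which is precisely why the cyclotomic ring $\mathcal{O}_K$ — rather than $\mathbb{Z}$ — is the natural coefficient ring, and I will want a clean lemma stating that the symmetric functions of these translates land back in $\mathcal{O}_K[j]$.
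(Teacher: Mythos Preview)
Your high-level strategy --- show that $h$ is integral over $\mathcal{O}_K[j]$ and then invoke classical CM theory for $j$ --- is \emph{not} what the paper does, and the step you yourself flag as the obstacle is a genuine gap that your proposed tools do not close. The Cummins--Gannon generalized modular equations $F_n^h(X,Y)$ relate $h(z)$ to $h(\omega z)$ for $\omega\in\Omega(n)$; they never produce a relation between $h$ and $j$. Your sentence ``combining $\Phi_N$ with the expression of $h$ as a rational function of $j$ and $j_N$ and clearing denominators'' is the whole difficulty: writing $h\in\mathbb{C}(j,j_N)$ and clearing denominators gives a relation whose leading coefficient is some unspecified polynomial in $j$, and nothing you have said forces it to be a unit in $\mathcal{O}_K[j]$. (Results of this type do exist --- this is roughly the route of Cox--McKay--Stevenhagen and Koo--Shin --- but they rely on controlling the $q$-expansions of $h$ at \emph{all} cusps of $\Gamma_0(N)$ or on explicit eta/Siegel expressions, neither of which you have, and not on the Cummins--Gannon equations.)

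The paper bypasses $j$ entirely and uses the generalized modular equations for $h$ \emph{directly}, in the spirit of the classical proof for $j$ itself. First one shows (Proposition~\ref{coefficients modular equations}) that for $n$ a square coprime to $N$ the diagonal polynomial $F_n^h(X,X)$ lies in $\mathbb{Z}[\zeta_N][X]$, with leading coefficient a unit if $n$ has at least two prime factors and $p\cdot(\text{unit})$ if $n=p^{2e}$. The arithmetic input is then not ``$j(\tau)$ is an algebraic integer'' but the construction of a primitive matrix $\rho$ of square determinant, with lower-left entry divisible by $N$ and upper-left entry coprime to $N$, that \emph{fixes} $\tau$. For $\tau_0\in\mathcal{O}_K\cap\mathbb{H}$ one represents infinitely many primes $l$ by the form $\mathrm{N}(\tau_0)N^2x^2+\mathrm{Tr}(\tau_0)Nxy+y^2$, builds $\rho_l$ with $\rho_l\tau_0=\tau_0$ and $\det\rho_l=l$, and proves (Lemma~\ref{mugennkouka}) that for all but finitely many such $l$ the square $\rho_l^2$ is primitive. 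Decomposing $\rho_l^2=\gamma\omega$ with $\gamma\in\Gamma_0(N)$, $\omega\in\Omega(l^2)$ gives $h(\tau_0)=h(\omega\tau_0)$, so $h(\tau_0)$ is a root of $F_{l^2}^h(X,X)$ and hence $l\cdot h(\tau_0)$ is an algebraic integer; doing this for two distinct primes yields $h(\tau_0)\in\overline{\mathbb{Z}}$. The general case is reduced to this one via $F_a^h$ by replacing $\tau$ with $a\tau$. The condition $(a,N)=1$ enters precisely to make these matrix decompositions work, whereas in your outline it plays no role --- a sign that your argument, if it could be completed, would actually prove a stronger statement than the one at hand.
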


For our proof, 
we will utilize the notion of \emph{generalized modular equations} \cite{Cummins-Gannon},
which was introduced in the study of monstrous moonshine.
Roughly speaking, generalized modular equations are modular equations twisted by the Galois actions of a cyclotomic field,
and this is why we restrict ourselves to Hauptmoduln with cyclotomic integer coefficients.
However, this assumption is not so unnatural, in the sense that 
Hauptmoduln for 
all the genus-zero discrete subgroups of \emph{moonshine-type}~(see Definition \ref{moonshine-type}), especially all the McKay-Thompson series in monstrous moonshine,
satisfy the conditions of the theorem.\\

It is worth pointing out that the (generalized) modular equations is not only a tool for studying the arithmetic nature of singular values of Hauptmoduln, but also an
important notion which enable us to understand some essence of the Hauptmodul property, in particular of monstrous moonshine (see Section 2.).\\

The paper is outlined as follows :
In Section 2, we will review some of the standard facts on the modular equation, and introduce the notion of generalized modular equations. 
In Section 3, we will give a proof of the main result of the present paper. 
Section 4 is devoted to the study of completely replicable functions, as an application of the main result.

\section*{Acknowledgement}
This is part of the author's master's thesis, written under the supervision of Professor Hiro-aki Narita at Waseda University.
The author wishes to express his deepest gratitude to his supervisor for his dedicated guidance.
The author also gratefully acknowledges the many helpful suggestions of Dr.~Shuji Horinaga~(Institute for Fundamental Mathematics in NTT) and Dr.~Shotaro Kimura~(Waseda University).
Especially, the ideas of our proof of Lemma.\ref{mugennkouka}, which is essential for this paper, is given by S.~Horinaga.
The author's profound gratitude is also due to Professor John~F.R.~Duncan~(Academia Sinica) for his kindly guiding the author to various aspects of moonshine.
He also corrected many English errors in this paper, and gave the author a lot of helpful advice. 
The author thanks Professor Claudia Alfes (Bielefeld University) for her careful reading of the manuscript and for her helpful comments.
The author also gratefully acknowledges the many helpful suggestions of Dr.~Jiacheng Xia (University of Wisconsin-Madison).
Professor Dmitry Kozlov (University of Bremen) and Professor Arne Meurman (Lund University) kindly sent the author a copy of Kozlov's master's thesis \cite{Kozlov}.
Finally, the author would like to thank his family for their support and understanding of his path of researching in mathematics.

\section{Modular equations}

\subsection{Modular equations for the $j$-function}
The modular equations for the $j$-function are algebraic relations between $j(z)$ and $j(nz)$ for positive integers $n$, and
these relations deduce some arithmetic properties of singular moduli.
Here, we will review the classical theory of modular equations (cf.~\cite{Lang elliptic}).\\

For a positive integer $n>1$, we define a set of matrices with integer entries
\[
\Omega(n) := \Bigl \{ \begin{pmatrix}
   a & b \\
   0 & d\end{pmatrix}   \in\text{Mat}_2(\mathbb{Z})         \; |\; \text{gcd}(a,b,d)=1, ad=n, 0\leq b < d\Bigr \}.
\]
We now introduce the notion of \emph{modular polynomials} for formal $q$-series.
\begin{definition}
Let $f(q)=q^{-1}+\sum_{m=1}^\infty a_mq^m\;(a_m\in\mathbb{C})$ be a formal $q$-series.
  The \emph{modular polynomial} of order $n$ for the formal series $f(q)$ is a monic polynomial with two variables $F_n^f(X,Y)\in \mathbb{C}[X,Y]$ which satisfies the following :
 \begin{itemize}
 \item $F_n^h(X,Y)=F_n^h(Y,X)$.
 \item The degree of $F_n^f(X,Y)$ is $\psi(n) := n \prod_{p|n} (1+\frac{1}{p})$ in $X$ and $Y$. Here $p$ runs primes dividing $n$.
 \item For any $\omega \in\Omega(n)$, the polynomial $F_n^f(X,Y)$ satisfies the following equations
 \[
 F_n^f\bigl(f(q),(f\circ \omega)(q) \bigr) =0.
 \]
\end{itemize}
  
\end{definition}
The above equation is called the \emph{modular equation} of order $n$ for the formal series $f(q)$.
For a formal series $f(q)$ and a positive integer $n>1$, the modular polynomial (or modular equation) $F_n^f(X,Y)$ does not always exist however,
for the $j$-function, we can construct modular equations of any order $n>1$.\\

For a positive integer $n>1$, we define a polynomial of $Y$ by
\[
F_n^j(Y) := \prod_{\omega\in\Omega(n)} (Y-j\circ\omega).
\]
By definition, the coefficients of $F_n^j(Y)$ are symmetric polynomials of $\{j\circ\omega \}_{\Omega(n)}$,
and it turns out that these are modular functions for $SL_2(\mathbb{Z})$. 
Hence the Hauptmodul property of $j$ and some considerations on their poles
asserts that 
all the coefficients of $F_n^j(Y)$ are polynomials of $j$. 
Since $\mathbb{C}[X,Y] \simeq \mathbb{C}[j,Y]$, there exists a unique polynomial with two variables $F_n^j(X,Y)$ satisfying
\[
F_n^j(j,Y) = F_n^j(Y),
\]
which satisfies the conditions defining the modular polynomial of order $n$.\\

Thus, the $j$-function satisfies the modular equations of order $n$ for any $n>1$,
and in particular if we take $\omega= \bigl(
\begin{smallmatrix}
   n & 0 \\
   0 & 1
\end{smallmatrix}
\bigl)$, the modular equation gives an algebraic relation between $j(z)$ and $j(nz)$.
Furthermore, it can be shown that the modular polynomials $F^j_n(X,Y)$ is monic and have rational integer coefficients, 
and for a CM point $\tau\in \mathbb{H}$, the special value $j(\tau)$ is a root of $F_n^j(X,X) \in \mathbb{Z}[X]$, which deduce that this singular modulus $j(\tau)$ is an algebraic integer.\\

Finally, we remark that the property of satisfying many modular equations essentially characterizes the $j$-function among $q$-series.
\begin{theorem} [\cite{Kozlov}]
  Let $f(q)=q^{-1}+\sum_{m=1}^\infty a_mq^m\;(a_m\in\mathbb{C})$ be a formal $q$-series.
  The $q$-series $f(q)$ satisfies a modular equation of order $n$ for all $n>1$
  if and only if  $f(q)$ is either $f(q)=J(q)=j(q)-744$, $f(q)=q^{-1}$, or $f(q)=q^{-1}\pm q$.
\end{theorem}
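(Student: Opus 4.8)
The plan is to characterize $q$-series satisfying modular equations of all orders by exploiting the interaction between the equations of order $n$ for varying $n$, together with the rigidity coming from the normalization $f(q) = q^{-1} + O(q)$. First I would analyze the modular equation of order $2$. Writing $\Omega(2) = \{\left(\begin{smallmatrix} 2 & 0 \\ 0 & 1 \end{smallmatrix}\right), \left(\begin{smallmatrix} 1 & 0 \\ 0 & 2 \end{smallmatrix}\right), \left(\begin{smallmatrix} 1 & 1 \\ 0 & 2 \end{smallmatrix}\right)\}$, the three series $f(2z)$, $f(z/2)$, $f((z+1)/2)$ all satisfy $F_2^f(f(z), \cdot) = 0$; since $F_2^f$ has degree $\psi(2) = 3$ in $Y$, the polynomial $\prod_{\omega \in \Omega(2)}(Y - f\circ\omega)$ must equal $F_2^f(f(z), Y)$ up to the leading coefficient, so the elementary symmetric functions of $\{f\circ\omega\}$ are polynomials in $f(z)$ of controlled degree. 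Comparing $q$-expansions (noting $f(z/2)$ and $f((z+1)/2)$ live in $\mathbb{C}[[q^{1/2}]]$ with the substitution $q^{1/2} \mapsto -q^{1/2}$ swapping them, so their symmetric functions are honest Laurent series in $q$) pins down $a_1, a_2, \dots$ recursively in terms of finitely many free parameters.

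The key structural step is to show that the constraints force a differential or functional recursion. I expect the cleanest route is the one used for replicability: the modular equation of order $n$ for the pair $(f(z), f(nz))$, combined across $n$, yields the replication formulae, and a normalized $q$-series $q^{-1} + O(q)$ that is replicable in all orders is known to be severely constrained. Concretely, I would extract from the order-$2$ and order-$3$ equations the first several replication identities, solve for $a_1, a_2, a_3, \dots$, and observe that the solution space is governed by at most one or two parameters; the Hauptmodul $J$ corresponds to one choice, $q^{-1}$ to the degenerate choice where all $a_n$ vanish, and $q^{-1} \pm q$ to the remaining sporadic solutions where $a_1 = \pm 1$ and all higher $a_n = 0$. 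One checks directly that these three candidates genuinely satisfy modular equations of every order: for $J$ this is the classical fact recalled above; for $q^{-1}$ and $q^{-1} \pm q$ one verifies that the functions $f\circ\omega$ over $\omega \in \Omega(n)$ have symmetric functions expressible as polynomials in $f$ (here using that $q^{-1} = \wp$-like and $q^{-1}\pm q$ relate to $2\cos$, which have genuine algebraic addition/multiplication relations).

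The main obstacle is the second direction: proving that no other solution exists, i.e. that the recursion from modular equations of all orders admits exactly these solutions and nothing else. The difficulty is that a single modular equation is a weak constraint — it only says the symmetric functions are \emph{some} polynomials in $f$ — so one must feed in the degree bound $\psi(n)$ and the genus-zero-style pole analysis at every $n$ simultaneously and show the accumulated conditions are rigid. I would handle this by an inductive argument on the index $m$ of the coefficient $a_m$: assuming $a_1, \dots, a_{m-1}$ have been shown to agree with one of the three candidates, choosing $n$ with a prime factor exceeding $m$ isolates $a_m$ linearly in the order-$n$ modular equation, and the degree constraint leaves only the admissible value. Closing this induction cleanly, while correctly tracking the $q^{1/2}$-type ambiguities introduced by the lower-triangular shears $\left(\begin{smallmatrix} a & b \\ 0 & d \end{smallmatrix}\right)$ with $b \neq 0$, is where the real work lies; I would lean on Kozlov's thesis \cite{Kozlov} for the bookkeeping and present the argument in the streamlined form that the replicability machinery of Section~\ref{replicable} makes available.
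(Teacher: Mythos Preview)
The paper does not prove this theorem at all: it is stated with attribution to Kozlov's thesis \cite{Kozlov} and no argument is given in the text. There is therefore no ``paper's own proof'' against which to compare your proposal.

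As for the proposal itself, it is a plausible outline but not a proof. Two points deserve flagging. First, the final paragraph openly defers the hard step---closing the induction that rules out any fourth solution---to ``Kozlov's thesis \cite{Kozlov} for the bookkeeping''; since the theorem \emph{is} Kozlov's, this is circular rather than an independent argument. Second, the claimed mechanism for the inductive step (``choosing $n$ with a prime factor exceeding $m$ isolates $a_m$ linearly'') is asserted, not established: you have not shown that the degree bound $\psi(n)$ on $F_n^f$ actually produces a nontrivial linear constraint on $a_m$ at that stage, nor why the accumulated constraints from smaller $n$ leave exactly the four listed series and no continuous family. The verification that $q^{-1}$ and $q^{-1}\pm q$ satisfy modular equations of every order is also only gestured at (``$\wp$-like'', ``relate to $2\cos$''); a genuine proof would write down $F_n^f(X,Y)$ explicitly in those cases or give a clean reason why the symmetric functions of $\{f\circ\omega\}_{\omega\in\Omega(n)}$ land in $\mathbb{C}[f]$.
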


\subsection{Generalized modular equations}
Monstrous moonshine relates the representation theory of the monster group $\mathbb{M}$ to distinguished modular functions, Hauptmoduln.
The theory of vertex operator algebra somewhat explain why the representation of $\mathbb{M}$ yields modular functions (c.f.~\cite{Zhu}) however,
it does not provide reasons why these modular functions have the genus-zero property.\\

In the proof of monstrous moonshine, Borcherds~\cite{Borcherds} utilizes the theory of  infinite dimensional Lie algebras to 
prove the McKay-Thompson series of $\mathbb{M}$ satisfy
certain algebraic recursion property, called \emph{replicability} (see Section \ref{replicable}).
Replicability imposes a strong restriction on the Fourier coefficients of the McKay-Thompson series,
which implies that
all the Fourier coefficients coincide with those of Hauptmoduln which are already known.
Thus, the background of the appearance of Hauptmoduln in monstrous moonshine remained unclear even after the complete proof by Borcherds,
which is called the "conceptual gap" (c.f.~\cite{Gannon_Hauptmodul}).\\

The problem is that however the genus-zero property of Hauptmoduln arises from the topology of Riemann surfaces, replicability is an algebraic or combinatorial property.
Cummins-Gannon~\cite{Cummins-Gannon} were motivated to fill this gap, 
and introduced the notion of \emph{generalized modular equation}, in order to interpret the genus-zero property
into algebraic settings.
Here, we will overview this theory.\\

Fix a positive integer $N\geq 1$, and let us denote by $\zeta_N$ a primitive root of unity of order $N$.
For an integer $n>1$ which is coprime to $N$, we will
denote by $*n$
an element of Gal$(\mathbb{Q}(\zeta_N)/\mathbb{Q})$ such that 
\[
\zeta_N *n = \zeta_N^n.
\]
Also, for a formal series $f(q)$, we denote by $f*n$
the $q$-series obtained from $h$ by applying $*n$ to each coefficient.
We now define the generalized modular polynomials for formal $q$-series.

\begin{definition}\label{moonshine-type}
  Let $f(q)= q^{-1}+ \sum_{m=1}^\infty a_mq^m$ be a formal $q$-series with $\mathbb{Q}(\zeta_N)$-coefficients, and $n>1$ be an integer coprime to $N$. 
The \emph{generalized modular polynomial} of order $n$ for the formal series $f(q)$ is a monic polynomial with two variables $F_n^f(X,Y)\in \mathbb{C}[X,Y]$ which satisfies the following :
 \begin{itemize}
 \item $F_n^h(X,Y)=(F_n^h*n)(Y,X)$.
 \item The degree of $F_n^f(X,Y)$ is $\psi(n) := n \prod_{p|n} (1+\frac{1}{p})$ in $X$ and $Y$. Here $p$ runs primes dividing $n$.
 \item For any $\omega \in\Omega(n)$, the polynomial $F_n^f(X,Y)$ satisfies the \emph{generalized modular equations} of order $n$
 \[
 F_n^f\bigl((f*n)(q),(f\circ \omega)(q) \bigr) =0.
 \]
\end{itemize}
\end{definition}

In the work of \cite{Cummins-Gannon}, it is shown that the property of satisfying many generalized modular equations essentially characterize some natural class of Hauptmoduln, called \emph{moonshine-type}.  
\begin{definition} \label{moonshine-type}
  A discrete subgroup $\Gamma\subset SL_2(\mathbb{R})$ is said to be \emph{moonshine-type} if the following conditions are satisfied :
\begin{quote}
 \begin{itemize}
  \item $\Gamma$ contains $\Gamma_0(N)$ for some $N>0$.
  \item $\begin{pmatrix}
   1 & t \\
   0 & 1\end{pmatrix} \in \Gamma \; \Rightarrow \; t\in \mathbb{Z}$.
 \end{itemize}
\end{quote}
\end{definition}

\begin{theorem}[{ \cite[Theorem.1.3,1.4]{Cummins-Gannon}}]
 Let $f(q)=q^{-1}+\sum_{m=1}^\infty a_mq^m$ be a formal $q$-series with $\mathbb{Q}(\zeta_N)$-coefficients.
  The $q$-series $f(q)$ satisfies a generalized modular equation of order $n$ for all $n\equiv 1 \mod N$
  if and only if it is either of the form $f(q)=q^{-1}+a_1q$ or is a Hauptmodul for some genus-zero moonshine-type group.
 
\end{theorem}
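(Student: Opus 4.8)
The plan is to prove the two implications separately, after first reducing the generalized modular equations to ordinary ones. Since $n\equiv 1\pmod N$ forces $\zeta_N^n=\zeta_N$, the automorphism $*n$ is trivial on $\mathbb{Q}(\zeta_N)$; hence $f*n=f$, and the defining conditions of a generalized modular polynomial collapse to those of an ordinary one, namely a monic symmetric $F_n^f(X,Y)$ of bidegree $\psi(n)$ with $F_n^f(f,f\circ\omega)=0$ for every $\omega\in\Omega(n)$. Because $\lvert\Omega(n)\rvert=\psi(n)$ and the $f\circ\omega$ are generically distinct, the existence of such an $F_n^f$ is equivalent to the assertion that
\[
\Phi_n(Y):=\prod_{\omega\in\Omega(n)}\bigl(Y-f\circ\omega\bigr)
\]
has all of its coefficients, the elementary symmetric functions $e_k$ of $\{f\circ\omega\}$, lying in $\mathbb{C}[f]$. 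I will work throughout with this reformulation.

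For the implication ``Hauptmodul $\Rightarrow$ modular equations'', let $f=h_\Gamma$ for a genus-zero moonshine-type group $\Gamma\supseteq\Gamma_0(N)$ and fix $n\equiv 1\pmod N$ (so $\gcd(n,N)=1$). First I would check that each $e_k$ is a modular function for $\Gamma$. Each $f\circ\omega$ is holomorphic on $\mathbb{H}$ and meromorphic at the cusps, so it suffices to prove $\Gamma$-invariance. For $\gamma\in\Gamma_0(N)$ one writes $\omega\gamma=\gamma_\omega\,\omega'$ with $\gamma_\omega\in\Gamma_0(N)$ and $\omega'\in\Omega(n)$ (valid since $\gcd(n,N)=1$), where $\omega\mapsto\omega'$ permutes $\Omega(n)$; then $(f\circ\omega)\circ\gamma=(f\circ\gamma_\omega)\circ\omega'=f\circ\omega'$ because $f$ is $\Gamma_0(N)$-invariant. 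Since a moonshine-type $\Gamma$ lies in the normalizer of $\Gamma_0(N)$, conjugation by any element of $\Gamma$ likewise permutes the cosets $SL_2(\mathbb{Z})\,\omega$, so the $e_k$ are invariant under all of $\Gamma$ and descend to meromorphic functions on $X_\Gamma$, hence are rational in $f$. A pole-order count at the cusps then upgrades this to polynomiality: the upper-triangular $\omega$ fix $\infty$, and because $n\equiv 1\pmod N$ trivializes the associated diamond action, no other cusp of $X_\Gamma$ is sent to $\infty$, so the only poles occur over $f=\infty$. This yields $\Phi_n(Y)=F_n^f(f,Y)$ with $F_n^f\in\mathbb{C}[X,Y]$ of bidegree $\psi(n)$; symmetry $F_n^f(X,Y)=F_n^f(Y,X)$ follows from the standard Fricke-involution symmetry of the $n$-th modular correspondence, which interchanges $(f(z),f(nz))$ with $(f(nz),f(z))$.

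The reverse implication is the substantial one. Assuming $f$ satisfies the modular equation for every $n\equiv 1\pmod N$, the goal is to reconstruct a genus-zero moonshine-type group, or else to land in the degenerate family. The plan is: (i) form the candidate invariance group $\Gamma_f:=\{\gamma\in SL_2(\mathbb{R}):f\circ\gamma=f\}$; (ii) mine the modular equations for elements of $\Gamma_f$, since each relation $F_n^f(f,f\circ\omega)=0$ exhibits the $f\circ\omega$ as the full root set of $\Phi_n$, and comparing $q$-expansions detects coincidences $f\circ\omega=f\circ\omega'$, whence $\omega'\omega^{-1}\in\Gamma_f$; running over all admissible $n$ and exploiting the multiplicative compatibility of the family $\{\Phi_n\}$ should produce enough such elements to force $\Gamma_0(M)\subseteq\Gamma_f$ for some $M$; (iii) show $\Gamma_f$ is discrete of the first kind and that $f$ separates its orbits, so $\mathbb{C}(f)$ is the entire function field of $\Gamma_f\backslash\mathbb{H}$; since $f=q^{-1}+O(q)$ contributes a single simple pole, the quotient surface has genus zero and $f$ is its Hauptmodul. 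The moonshine-type normalization $\bigl(\begin{smallmatrix}1&t\\0&1\end{smallmatrix}\bigr)\in\Gamma_f\Rightarrow t\in\mathbb{Z}$ is then inherited from the integer-exponent $q$-expansion of $f$.

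The crux, and the main obstacle, is the passage in steps (ii)--(iii) from the purely algebraic data of the modular equations to an honest Fuchsian group of the first kind with genus-zero quotient. One must rule out that the coincidences $f\circ\omega=f\circ\omega'$ generate a non-discrete subgroup of $SL_2(\mathbb{R})$, and this is precisely where the exceptional solutions $f=q^{-1}+a_1q$ intervene: for these the would-be invariance group fails to be of the first kind (for instance $q^{-1}+q=2\cos 2\pi z$ satisfies Chebyshev-type relations that mimic modular equations without arising from a genus-zero Fuchsian group), so they must be excised as the stated exception. To settle this dichotomy I would follow the Cummins--Gannon monodromy argument: realize the compatible family $\{F_n^f\}$ as a system of branched covers of the $f$-line, apply the Riemann existence theorem to build the surface $\Gamma_f\backslash\mathbb{H}^*$, and use the degree-one pole of $f$ to force its genus to vanish. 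The congruence (rather than merely discrete) nature of $\Gamma_f$ then follows because the matrices $\omega\in\Omega(n)$ carry congruence data, so the elements produced in (ii) lie in the normalizer of $\Gamma_0(M)$.
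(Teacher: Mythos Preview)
The paper does not supply its own proof of this statement: it is quoted as Theorems~1.3 and~1.4 of \cite{Cummins-Gannon} and used as a black box. The only related argument appearing in the paper is the construction in Section~3.1 (again deferring the key step to \cite[Prop.~6.16]{Cummins-Gannon}), which effectively handles the forward implication for $n$ coprime to $N$, not just $n\equiv 1\pmod N$. So there is no in-paper proof to compare your proposal against.

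Regarding your sketch itself, there is a genuine gap in the forward direction. You assert that ``a moonshine-type $\Gamma$ lies in the normalizer of $\Gamma_0(N)$'' in order to promote $\Gamma_0(N)$-invariance of the elementary symmetric functions $e_k$ to full $\Gamma$-invariance. This is not part of Definition~\ref{moonshine-type}: the definition only requires $\Gamma\supseteq\Gamma_0(N)$ and that the translations in $\Gamma$ be by integers. That such a $\Gamma$ normalizes some $\Gamma_0(M)$ is a nontrivial structural fact, not something one may invoke a priori. The Cummins--Gannon route, which the paper cites, sidesteps this by showing instead that the $e_k$ lie in $H_f[f*n]$, where $f*n$ is itself proved to be a Hauptmodul for an appropriate group; with $n\equiv 1\pmod N$ this collapses to $f*n=f$, but the invariance argument is run for that auxiliary group rather than by asserting a normalizer containment for $\Gamma$.

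Your outline of the reverse implication is honest about where the real difficulty lies and correctly identifies the dichotomy with the degenerate series $q^{-1}+a_1 q$. But steps (ii)--(iii) as written are a plan, not a proof: manufacturing enough relations $f\circ\omega=f\circ\omega'$ from the modular equations to force $\Gamma_0(M)\subseteq\Gamma_f$, and then establishing discreteness, finite covolume, and genus zero of the quotient, is exactly the substance of \cite{Cummins-Gannon} and cannot be compressed into a paragraph. In particular, the appeal to ``the Riemann existence theorem to build the surface $\Gamma_f\backslash\mathbb{H}^*$'' presupposes that the compatible family of covers already assembles into a tower with the right monodromy, which is the heart of the matter.
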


Thus, the generalized modular equations gives an algebraic interpretation of the notion of Hauptmoduln.
In particular, Hauptmoduln in this theorem satisfy generalized modular equations
of order $n$ for any $n\equiv 1\mod{N}$.
Since we do not need so many modular equations for our purpose,
we will consider slightly more general class of Hauptmoduln in the next section.\\

Also, the relation between modular equations and replicability has been studied (cf.~\cite{Carnahan 1},\cite{Carnahan 2},\cite{Cummins-Gannon},\cite{Kozlov}),
and we will come back to this topic in Section~\ref{replicable} of the present paper.

\section{Proof of the main results}
In this section, we will give a proof of the main result of the present paper (Thoerem \ref{main thm}), which asserts that the CM values of Hauptmoduln with cyclotomic integer $q$-coefficients are algebraic integers.
The proof is similar to the case of the elliptic modular $j$-function however,
there are some obstacles because we have to consider the generalized modular equations.

\subsection{Construction of generalized modular equations}
First, we construct the generalized modular equations for our case, following the work of \cite{Cummins-Gannon}.\\

Let $\Gamma \subset SL_2(\mathbb{R})$ be a discrete subgroup of genus-zero, and suppose that $\Gamma$ contains a congruence subgroup $\Gamma_0(N)$ for some $N\geq 1$.
Let 
\[
h(z) = q^{-1} + \sum_{m=1}^\infty a_m q^m 
\]
be the Hauptmodul for the genus-zero group $\Gamma$.
Also, we suppose that all the $q$-coefficients of the Hauptmodul $h$ for $\Gamma$ are contained in a cyclotomic field $\mathbb{Q}(\zeta_k)$ for some $k>1$ :
\[
a_m \in \mathbb{Q}(\zeta_k) \;\;(\forall m\geq 1).
\]

We remark that without loss of generality, we can assume that all the $q$-coefficients of $h$ 
are contained in the cyclotomic field $\mathbb{Q}(\zeta_N)$.
This is because even if $a_m\in\mathbb{Q}(\zeta_k) \not\subset \mathbb{Q}(\zeta_N)$, 
we have $\Gamma \supset \Gamma_0(N) \supset \Gamma_0(kN)$ and $a_m \in\mathbb{Q}(\zeta_k)\subset \mathbb{Q}(\zeta_{kN})$,
hence by replacing $N$ by $kN$, we have
\[
\Gamma\supset\Gamma_0(N),\;\; a_m\in\mathbb{Q}(\zeta_N).
\]

Now, for a positive integer $n>1$ which is coprime to $N$, we consider a polynomial with one variable
\[
 F_n^h(Y)  := \prod _{\omega\in\Omega(n)} (Y-h\circ\omega).
\]
Each coefficient of $F_n^h(Y)$
is a symmetric polynomial of $\{ h\circ\omega \}_{\omega\in \Omega(n)}$,
and it turns out by the following proposition that these are all polynomials of 
$h*n$.

\begin{proposition}[{\cite[Proposition.6.16]{Cummins-Gannon}}]\label{symmetric polynomial}\:
  Let $f$ be a Hauptmodul for a genus-zero group $\Gamma\supset \Gamma_0(N)$, and $n$ be a positive integer coprime to $N$. 
  We suppose that the Hauptmodul $h$ has $\mathbb{Q}(\zeta_N)$-coeffients, and denote by $H_f$ the field generated by the $q$-coefficients of $f$ over $\mathbb{Q}$. 
  Then any symmetric polynomial of $\{f\circ\omega\} _{\omega\in\Omega(n)}$ is an element of $H_f[f*n]$.
\end{proposition}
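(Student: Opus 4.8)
The plan is to prove Proposition~\ref{symmetric polynomial} by identifying, for each $\omega \in \Omega(n)$, the modular group under which $f \circ \omega$ is invariant, and then showing that the full set $\{f\circ\omega\}_{\omega\in\Omega(n)}$ is permuted by a natural group action so that symmetric functions of the set descend to a function on a quotient surface carrying $f*n$ as Hauptmodul. First I would recall that since $f$ is a modular function for $\Gamma \supset \Gamma_0(N)$, and $\Omega(n) = \Gamma_0(N)\backslash \{M \in \mathrm{Mat}_2^+(\mathbb{Z}) : \det M = n,\ \gcd = 1\}$ up to the usual bijection when $(n,N)=1$, the function $f \circ \omega$ depends only on the coset $\Gamma_0(N)\,\omega$ (more precisely on $(\omega^{-1}\Gamma_0(N)\omega \cap SL_2(\mathbb{R}))$-equivalence). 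The set $\{f\circ\omega : \omega\in\Omega(n)\}$ has exactly $\psi(n) = |\Omega(n)|$ elements, and the key structural fact from \cite{Cummins-Gannon} is that conjugation/multiplication by elements of $SL_2(\mathbb{Z})$ permutes these cosets while simultaneously inducing the Galois twist: the slash action of $SL_2(\mathbb{Z})$ on $\Omega(n)$ realizes, at the level of $q$-expansions, the substitution $q \mapsto \zeta_N^j q$, which is exactly the Galois action $*m$ on the Fourier coefficients when combined with the $\Gamma_0(N)$-structure.

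Next I would make the symmetry precise. Any symmetric polynomial $P(\{f\circ\omega\}_\omega)$ is, by construction, invariant under the $SL_2(\mathbb{Z})$-action that permutes $\Omega(n)$; hence $P$ is a modular function for a group containing $\Gamma_0(N)$, but the twist means it is not quite invariant under $\Gamma$ itself — rather, applying a generator $\gamma \in \Gamma$ that acts on $\mathbb{Q}(\zeta_N)$ as $*n$ sends $P$ to $P*n$ composed with $\gamma$. The correct statement is that $P$ descends to a rational function on $X_\Gamma$ with coefficients twisted by $*n$, i.e. $P \in H_f(f*n)$ as \emph{functions}. To upgrade from "rational function of $f*n$" to "polynomial in $f*n$", I would analyze the poles: each $f\circ\omega$ has a single simple pole on $X_\Gamma$ (at the cusp/point where $\omega$ sends $i\infty$), so each elementary symmetric function, hence $P$, has poles only over the cusps, and after the twist the only pole is at the cusp $i\infty$ of $f*n$ where $f*n$ has its $q^{-1}$ pole; a function on a genus-zero surface regular away from $\{f*n = \infty\}$ is a polynomial in $f*n$. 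The coefficients of this polynomial lie in $H_f$ because they are determined by finitely many Fourier coefficients of $f$ and its transforms, all of which live in $\mathbb{Q}(\zeta_N) \subset$ the algebraic closure, and the twist $*n$ fixes $H_f$ setwise; tracking this carefully gives $P \in H_f[f*n]$.

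The main obstacle I expect is bookkeeping the Galois twist correctly: one must verify that the $SL_2(\mathbb{Z})$-permutation of $\Omega(n)$ does not merely permute the $f\circ\omega$ among themselves but genuinely intertwines with the arithmetic $*n$-action on coefficients, so that the invariant $P$ transforms as a function of $f*n$ rather than of $f$. This requires the identity (implicit in \cite{Cummins-Gannon}) relating $f \circ \begin{pmatrix} a & b \\ 0 & d\end{pmatrix}$, whose $q$-expansion involves $\zeta_d$-powers, to the coefficientwise Galois conjugate; pinning down which primitive root of unity appears and matching it to the chosen embedding $*n \in \mathrm{Gal}(\mathbb{Q}(\zeta_N)/\mathbb{Q})$ is the delicate point. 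A secondary, more routine, obstacle is the pole count at cusps other than $i\infty$: one must check that the (twisted) symmetric functions genuinely have no pole there, which follows because the total order of poles of $\prod_\omega(Y - f\circ\omega)$ is controlled and every $f\circ\omega$ is holomorphic at every cusp except its own image of $i\infty$, but it needs the explicit cusp-width comparison for $\Gamma_0(N)$ versus $\Gamma$. I would cite the relevant lemma of \cite{Cummins-Gannon} for this rather than reprove it, and devote the written proof primarily to the twist-intertwining identity.
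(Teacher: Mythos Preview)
The paper does not prove this proposition; it is stated with attribution to \cite{Cummins-Gannon} (their Proposition~6.16) and then used as a black box in the construction of $F_n^h(Y)$. There is thus no proof in the present paper to compare your attempt against.

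For what it is worth, your outline follows the same architecture as the original Cummins--Gannon argument: establish invariance of the elementary symmetric functions of $\{f\circ\omega\}_{\omega\in\Omega(n)}$ under a congruence group via the right-coset permutation of $\Omega(n)$, control the poles to lie over the cusp $i\infty$, and then invoke the Hauptmodul property to pass from a rational function to a polynomial. Two remarks. First, the permuting group should be $\Gamma_0(N)$ acting on the right, not all of $SL_2(\mathbb{Z})$: for $\gamma\in\Gamma_0(N)$ and $(n,N)=1$ one has $\omega\gamma\in\Gamma_0(N)\cdot\Omega(n)$, whence $f\circ\omega\circ\gamma=f\circ\omega'$; for $\gamma\in SL_2(\mathbb{Z})\setminus\Gamma$ this fails in general, so your claim of $SL_2(\mathbb{Z})$-invariance is too strong. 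Second, the step you correctly flag as the main obstacle---why the resulting polynomial is in $f*n$ rather than in $f$---is precisely where Cummins--Gannon invoke Shimura's description of $\mathrm{Aut}(\mathfrak{F}_N)$ to identify the coefficientwise action $*n$ with composition by an explicit element of $GL_2^+(\mathbb{Q})$ and to verify that $f*n$ is again a Hauptmodul for $\Gamma$; only then does the pole count force the symmetric functions into $H_f[f*n]$. Your sketch gestures at this intertwining but would need that identification made precise to constitute a proof.
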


Hence we have
\[
F_n^h(Y) \in \mathbb{Q}(\zeta_N)[h*n,Y],
\]
and since $\mathbb{Q}(\zeta_N)[h*n,Y] \simeq \mathbb{Q}(\zeta_N)[X,Y]$, we get a
unique polynomial with two variables $F_n^h(X,Y)\in\mathbb{Q}(\zeta_N)[X,Y]$ such that
\[
F_n^h(h*n,Y) = F_n^h(Y).
\]
By its construction, $F_n^h(X,Y)$ satisfies the generalized modular equations of order $n$
\[
   F_n^h\bigl((h*n)(q), (h\circ\omega)(q)\bigr) = 0 \;\;\;\; (\forall \omega\in\Omega(n)).
\]
Indeed, it turns out that these polynomials are the generalized modular polynomials for the Hauptmodul $h$ (cf.~\cite[Proposition 6.18]{Cummins-Gannon}). \\

\subsection{Coefficients of the generalized modular polynomials}
Here, we compute coefficients of the diagonal restriction of generalized modular polynomials.\\

First, we take appropriate orders of generalized modular polynomials so that the Galois action acts to the $q$-coefficients trivially.
Let $\mathfrak{F}_N$ denote the field of modular functions for the principal congruence group $\Gamma(N)$ whose $q$-coefficients are contained in the cyclotomic field $\mathbb{Q}(\zeta_N)$.
Then it is known that for $n$ coprime to $N$ the Galois action $*n^2$ gives an automorphism of the field $\mathfrak{F}_N$ (cf.~\cite{Shimura}).
Due to this fact and Hauptmodul property of $h$, we get the following lemma.

\begin{lemma}[{\cite[Proposition.6.14]{Cummins-Gannon}}]\label{square invariance}
Let $n$ coprime to $N$, and $\Gamma$ be a genus-zero subgroup of $SL_2(\mathbb{R})$. Suppose that the Hauptmodul for $\Gamma$ has $q$-coefficients in 
$\mathbb{Q}(\zeta_N)$. 
Then we have
\[
h*n^2 =h.
\]
\end{lemma}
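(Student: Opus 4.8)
The plan is to combine the fact that $h$ lies in Shimura's field $\mathfrak{F}_N$ with the uniqueness of the normalized Hauptmodul of a genus-zero group. First note $h\in\mathfrak{F}_N$: since $\Gamma\supseteq\Gamma_0(N)\supseteq\Gamma(N)$, the function $h$ is a modular function for $\Gamma(N)$, and by hypothesis its $q$-coefficients lie in $\mathbb{Q}(\zeta_N)$. By the cited result of Shimura, $*n^2$ restricts to a field automorphism $\sigma$ of $\mathfrak{F}_N$; since $j$ has rational integer $q$-coefficients, $\sigma$ fixes $\mathfrak{F}_1=\mathbb{Q}(j)$, so $\sigma$ belongs to the finite group $\mathrm{Gal}(\mathfrak{F}_N/\mathbb{Q}(j))$ and in particular has finite order. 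It then suffices to show that $h*n^2=\sigma(h)$ is again invariant under all of $\Gamma$. Indeed, granting this and writing $A$ for the field of $\Gamma$-invariant modular functions with $\mathbb{Q}(\zeta_N)$-coefficients, we get $\sigma(A)\subseteq A$; applying $\sigma^{-1}$ (a power of $\sigma$, so also stabilizing $A$) yields $\sigma(A)=A$, so $\sigma$ restricts to an automorphism of $A$. Since $X_\Gamma$ has genus zero with Hauptmodul $h$, $A=\mathbb{Q}(\zeta_N)(h)$ is a rational function field, so $\sigma(h)$ generates it over $\sigma(\mathbb{Q}(\zeta_N))=\mathbb{Q}(\zeta_N)$; hence $\sigma(h)$ is a fractional-linear transformation of $h$ with coefficients in $\mathbb{Q}(\zeta_N)$. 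Comparing the $q$-expansions of $h$ and $h*n^2$, both of the shape $q^{-1}+O(q)$ with the same leading term and vanishing constant term, forces that M\"obius transformation to be the identity, i.e. $h*n^2=h$.

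It remains to prove the claim that $\sigma(h)$ is $\Gamma$-invariant. Under Shimura's isomorphism $\mathrm{Gal}(\mathfrak{F}_N/\mathbb{Q}(j))\cong GL_2(\mathbb{Z}/N\mathbb{Z})/\{\pm 1\}$, the automorphism $\sigma$ is the class of $\mathrm{diag}(1,n^2)$, and we factor $\mathrm{diag}(1,n^2)=\mathrm{diag}(n,n)\,\mathrm{diag}(n^{-1},n)$. The factor $\mathrm{diag}(n^{-1},n)$ lies in the image of $\Gamma_0(N)$ (the upper-triangular Borel subgroup of $SL_2(\mathbb{Z}/N\mathbb{Z})$), hence fixes $h$ because $h$ is $\Gamma$-invariant; therefore $\sigma(h)=s(h)$, where $s$ is the operation of the central scalar $\mathrm{diag}(n,n)$. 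Being a scalar, $s$ is central, so it commutes with the action of $SL_2(\mathbb{Z})$ on $\mathfrak{F}_N$, and — in Shimura's adelic description of the action of $GL_2(\mathbb{A}_f)$ on the field of all modular functions — with the action of $GL_2^+(\mathbb{Q})$. As $\Gamma$ is commensurable with $SL_2(\mathbb{Z})$ it lies, modulo scalars, in $GL_2^+(\mathbb{Q})$, so $s$ commutes with $f\mapsto f\circ\gamma$ for every $\gamma\in\Gamma$. Hence $(h*n^2)\circ\gamma=s(h)\circ\gamma=s(h\circ\gamma)=s(h)=h*n^2$ for every $\gamma\in\Gamma$, so $h*n^2$ is $\Gamma$-invariant.

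The main obstacle is this last commutation, specifically for the elements $\gamma\in\Gamma\setminus SL_2(\mathbb{Z})$ (the Atkin--Lehner-type elements). For $\gamma\in SL_2(\mathbb{Z})$ it reduces to the triviality that scalars are central in $GL_2(\mathbb{Z}/N\mathbb{Z})$, but a general rational $\gamma$ changes the level, so one must either invoke Shimura's $GL_2(\mathbb{A}_f)$-action directly or pass to a common level for $h$ and $h\circ\gamma$, and verify that the Galois twist $*n^2$ really does factor as (the action of a central adele) composed with (the action of an element of $\Gamma_0(N)$, which fixes $h$). I expect essentially all the content of the lemma to sit in making that factorization and commutation precise; the reduction in the first paragraph and the $q$-expansion endgame are formal.
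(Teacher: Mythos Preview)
The paper does not prove this lemma; it simply cites \cite[Proposition~6.14]{Cummins-Gannon} and offers the one-line hint that the result follows from Shimura's fact that $*n^2$ is an automorphism of $\mathfrak{F}_N$ together with the Hauptmodul property of $h$. Your argument is exactly a fleshing-out of that hint, and it is correct; in particular the reduction to showing $\Gamma$-invariance of $h*n^2$, and the M\"obius--plus--$q$-expansion endgame, are precisely how the Hauptmodul property is used.

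Your stated worry about the commutation step for $\gamma\in\Gamma\setminus SL_2(\mathbb{Z})$ is legitimate but the adelic argument you sketch does go through without difficulty. Lift $\mathrm{diag}(n,n)\in GL_2(\mathbb{Z}/N\mathbb{Z})$ to $\hat{s}\in GL_2(\hat{\mathbb{Z}})$ by taking the scalar $n\cdot I\in GL_2(\mathbb{Z}_p)$ at each prime $p\mid N$ (legitimate since $(n,N)=1$ forces $n\in\mathbb{Z}_p^\times$) and the identity at each $p\nmid N$. Then $\hat{s}_p$ is a scalar matrix in $GL_2(\mathbb{Q}_p)$ for \emph{every} $p$, so $\hat{s}$ is genuinely central in $GL_2(\mathbb{A}_f)$, not merely in $GL_2(\mathbb{Z}/N\mathbb{Z})$; and its action on $\mathfrak{F}_N$ is the action of $s$, since $GL_2(\hat{\mathbb{Z}})$ acts through $GL_2(\mathbb{Z}/N\mathbb{Z})$. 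Any $\gamma\in\Gamma$ lies, up to a positive real scalar, in $GL_2^+(\mathbb{Q})$ (because $\Gamma\supset\Gamma_0(N)$ is contained in the commensurator of $SL_2(\mathbb{Z})$), so its diagonal image in $GL_2(\mathbb{A}_f)$ commutes with $\hat{s}$, and hence the induced automorphisms of $\mathfrak{F}$ commute. This yields $(h*n^2)\circ\gamma=s(h)\circ\gamma=s(h\circ\gamma)=s(h)=h*n^2$, closing the gap you flagged.
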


Therefore, by considering square orders, we get the following properties of the generalized modular equations which we have constructed.

\begin{proposition}\label{modular equation two variants}
 Let $n$ be a square integer coprime to $N$, and $\Gamma$ be a genus-zero subgroup of $SL_2(\mathbb{R})$. Suppose that the Hauptmodul for $\Gamma$ has $q$-coefficients in 
$\mathbb{Q}(\zeta_N)$.
Then we have
  \[
  F_n^h(h*n,Y) = F_n^h(h,Y) = F_n^h(Y) \in \mathbb{Q}(\zeta_N)[h,Y].
  \]
\end{proposition}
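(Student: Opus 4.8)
The plan is to deduce everything from the square-invariance Lemma~\ref{square invariance} together with the defining property $F_n^h(h*n,Y)=F_n^h(Y)$ established in the construction above. The only genuinely new observation needed is an elementary number-theoretic one: if $n$ is a square integer coprime to $N$, say $n=m^2$, then $m$ is itself coprime to $N$, since any prime dividing $\gcd(m,N)$ would divide $\gcd(n,N)=1$. In particular the Galois element $*n$ is defined, and as elements of $\mathrm{Gal}(\mathbb{Q}(\zeta_N)/\mathbb{Q})$ we have $*n = *m^2 = (*m)^2$, because $\zeta_N * m^2 = \zeta_N^{m^2} = (\zeta_N^m)^m = (\zeta_N*m)*m$.

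With this in hand I would apply Lemma~\ref{square invariance} with $m$ in place of $n$ to obtain $h*n = h*m^2 = h$; that is, the twist $*n$ fixes every $q$-coefficient of $h$, so $h*n$ and $h$ are literally the same $q$-series. Substituting this identity into the relation $F_n^h(h*n,Y)=F_n^h(Y)$ coming from the construction of the generalized modular polynomial gives immediately
\[
F_n^h(h,Y) = F_n^h(h*n,Y) = F_n^h(Y).
\]
Finally, since we have already shown $F_n^h(X,Y)\in\mathbb{Q}(\zeta_N)[X,Y]$, specializing $X=h$ places $F_n^h(h,Y)$ in $\mathbb{Q}(\zeta_N)[h,Y]$, which is the last assertion.

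I do not expect any real obstacle in this argument: all the substance is carried by Lemma~\ref{square invariance} and by the construction of $F_n^h$ from the preceding subsection, and what remains is a direct substitution. The one point to state carefully is the descent ``$n$ square and coprime to $N$'' $\Rightarrow$ ``$\sqrt{n}$ coprime to $N$'', so that Lemma~\ref{square invariance} genuinely applies to $*n$; this is the step I would write out explicitly rather than leave implicit.
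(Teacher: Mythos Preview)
Your proposal is correct and follows precisely the approach the paper intends: the paper states the proposition immediately after Lemma~\ref{square invariance} with only the sentence ``Therefore, by considering square orders, we get the following properties\ldots'' as justification, and your argument simply spells out this deduction. The one detail you add explicitly --- that $n$ square and coprime to $N$ forces $\sqrt{n}$ coprime to $N$, so that Lemma~\ref{square invariance} applies to $*n=(*\sqrt{n})^2$ --- is exactly the small gap a careful reader would want filled.
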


Now, we prepare the next lemma on the cyclotomic polynomials
$\Phi_n(X)=\prod_{1\leq k\leq n,(k,n)=1}(X-\zeta_n^k)$ for our proof of the main results. For the proofs we refer the reader to \cite{Lang alg} (Chapter \rm{I}\hspace{-1pt}\rm{V},~Theorem 1.1).
\begin{lemma}\label{cyclotimic polynomials}
It holds
  \begin{equation*}
  \Phi_n(1)=
  \begin{cases}
    p & \text{if $n$ is a prime power $n=p^e$}, \\
    1                 & \text{otherwise.} 
  \end{cases}
\end{equation*}
\end{lemma}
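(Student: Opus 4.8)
The plan is to derive everything from the fundamental factorization $X^n - 1 = \prod_{d \mid n} \Phi_d(X)$ in $\mathbb{Z}[X]$. Since $\Phi_1(X) = X - 1$ vanishes at $X = 1$, one cannot specialize $X = 1$ directly; instead I would first cancel the factor $X - 1$ to obtain $1 + X + \cdots + X^{n-1} = \prod_{d \mid n,\, d > 1} \Phi_d(X)$, and only then evaluate at $X = 1$, which yields the master identity
\[
\prod_{\substack{d \mid n \\ d > 1}} \Phi_d(1) = n .
\]
I would also record that each $\Phi_d(1)$ with $d > 1$ is a \emph{positive} integer: it lies in $\mathbb{Z}$ because $\Phi_d$ is monic with integer coefficients, and positivity follows by pairing complex-conjugate primitive roots (or, inductively, from the master identity itself). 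Thus the displayed formula is a genuine factorization of $n$ into positive integers.

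Next I would argue by strong induction on $n > 1$. The base case $n = p$ prime is immediate, since $\Phi_p(1) = 1 + 1 + \cdots + 1 = p$. For the inductive step, consider first $n = p^e$ a prime power with $e \geq 2$. The divisors $d > 1$ of $p^e$ are exactly $p, p^2, \ldots, p^e$, and by the induction hypothesis $\Phi_{p^i}(1) = p$ for $1 \leq i \leq e-1$; substituting into the master identity gives $p^{\,e-1}\,\Phi_{p^e}(1) = p^e$, hence $\Phi_{p^e}(1) = p$. (Alternatively one may invoke the identity $\Phi_{p^e}(X) = \Phi_p\!\left(X^{p^{e-1}}\right)$ and set $X = 1$.)

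Finally, suppose $n$ has at least two distinct prime divisors. Then every prime-power divisor $p^f$ of $n$ with $f \geq 1$ is a \emph{proper} divisor, so the induction hypothesis applies to all divisors $d$ with $1 < d < n$. Separating the factor $\Phi_n(1)$ from the master identity and grouping the remaining factors according to whether they are prime powers, we get
\[
\Phi_n(1) \cdot \prod_{p \mid n} p^{\,v_p(n)} = n ,
\]
where for each prime $p \mid n$ the prime-power proper divisors that are powers of $p$ are exactly $p, p^2, \ldots, p^{v_p(n)}$, contributing $v_p(n)$ factors of $p$, while every non-prime-power proper divisor contributes $1$. Since $\prod_{p \mid n} p^{\,v_p(n)} = n$, we conclude $\Phi_n(1) = 1$, completing the induction.

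There is no serious obstacle here; the only points requiring care are the two already flagged — one must cancel $\Phi_1(X) = X - 1$ before specializing $X \to 1$, and one must establish that each $\Phi_d(1)$ is a bona fide positive integer so that the divisor-counting in the last paragraph is legitimate.
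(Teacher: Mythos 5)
Your proof is correct. The paper itself gives no argument for this lemma --- it simply cites Lang's \emph{Algebraic Number Theory} --- and what you have written is the standard proof one finds there: cancel $\Phi_1(X)=X-1$ from $X^n-1=\prod_{d\mid n}\Phi_d(X)$ before specializing, evaluate at $X=1$ to get $\prod_{d\mid n,\,d>1}\Phi_d(1)=n$, and induct; both delicate points (not plugging $X=1$ into the factor $X-1$, and integrality/positivity of the $\Phi_d(1)$) are handled properly, so your self-contained argument is a valid replacement for the citation.
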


From now on, we suppose that the $q$-coefficients of Hauptmoduln are contained in  the ring of integers $\mathbb{Z}[\zeta_N]$,
consider the diagonal restriction $F_n^h(h,h)$ for square order $n$.
We note that its coefficients are generally contained in $\mathbb{Q}(\zeta_N)$, not $\mathbb{Z}[\zeta_N]$, because the field $H_h$ in Proposition.\ref{symmetric polynomial} is $\mathbb{Q}(\zeta_N)$.

\begin{proposition}\label{coefficients modular equations}
Let $n$ be a square integer coprime to $N$, and $h$ be a Hauptmodul whose Fourier coefficients are contained in $\mathbb{Z}[\zeta_N]$.
Then we have 
\[
F_n^h(h,h) \in \mathbb{Z}[\zeta_N][h].
\]
Furthermore, its leading coefficient is\\
(a) a unit of the integer ring $\mathbb{Z}[\zeta_N]$, if $n$ has two or more prime divisors.\\
(b) represented as $p\times u$ for some unit $u\in \mathbb{Z}[\zeta_N]$, if $n$ is a power of prime $p$.

\end{proposition}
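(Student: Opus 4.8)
The plan is to analyze the polynomial $F_n^h(Y) = \prod_{\omega\in\Omega(n)}(Y - h\circ\omega)$ directly, using the explicit description of $\Omega(n)$, and to track where the denominators and the leading coefficient come from when we substitute $Y = h$. First I would recall that for $\omega = \bigl(\begin{smallmatrix} a & b\\ 0 & d\end{smallmatrix}\bigr)\in\Omega(n)$ with $ad=n$, $0\le b<d$, the $q$-expansion of $h\circ\omega$ is
\[
(h\circ\omega)(z) = (h*a)\!\left(\zeta_d^{\,b}\, q^{a/d}\right) = \zeta_d^{-b} q^{-a/d} + \sum_{m\ge 1} (a_m * a)\,\zeta_d^{\,bm}\, q^{am/d},
\]
so that every coefficient of $h\circ\omega$ lies in $\mathbb{Z}[\zeta_N][\zeta_d]\subseteq \mathbb{Z}[\zeta_{nN}]$ by the hypothesis $a_m\in\mathbb{Z}[\zeta_N]$. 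Consequently every symmetric function of $\{h\circ\omega\}_{\omega\in\Omega(n)}$ — in particular every coefficient of $F_n^h(Y)$ as a Laurent series in $q^{1/n}$ — has coefficients in $\mathbb{Z}[\zeta_{nN}]$. By Galois descent (the symmetric functions are invariant under $\mathrm{Gal}(\mathbb{Q}(\zeta_{nN})/\mathbb{Q}(\zeta_N))$ since that group permutes $\Omega(n)$ and the roots of unity $\zeta_d$ among themselves, compatibly) these coefficients in fact lie in $\mathbb{Z}[\zeta_N][[q^{1/n}]][q^{-1/n}]$, and since $n$ is a square, Lemma~\ref{square invariance} and Proposition~\ref{modular equation two variants} identify each coefficient of $F_n^h(Y)$ with an element of $\mathbb{Z}[\zeta_N][h]$ rather than merely $\mathbb{Q}(\zeta_N)[h]$: the point is that the coefficient, being a modular function for $\Gamma$ with $q$-expansion having coefficients in $\mathbb{Z}[\zeta_N]$, is a $\mathbb{Z}[\zeta_N]$-polynomial in $h$ because $h = q^{-1}+O(q)$ lets one clear the principal part iteratively with integral coefficients. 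This gives $F_n^h(h,h)\in\mathbb{Z}[\zeta_N][h]$.

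Next I would compute the leading coefficient. The degree of $F_n^h(Y)$ in $Y$ is $\psi(n) = \#\Omega(n)$, and after substituting $Y=h$, the top-degree term in $h$ comes from matching the most negative power of $q$. The most negative $q$-power appearing in $\prod_\omega (h - h\circ\omega)$ is governed by the terms where the pole orders $a/d$ add up: writing $h - h\circ\omega = q^{-1} - \zeta_d^{-b} q^{-a/d} + \cdots$, the leading behavior is $q^{-1}$ when $a/d < 1$ and $-\zeta_d^{-b} q^{-a/d}$ when $a/d > 1$ (and when $a=d$, i.e. $n$ a square and $\omega = \sqrt n\cdot I$ up to the $b$-shift, a cancellation in the $q^{-1}$ term forces one to look at the next term). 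Expanding $F_n^h(h,h)$ as a Laurent series in $q$ and extracting the coefficient of the leading power of $q^{-1}$, the leading coefficient of $F_n^h(h,h)$ as a polynomial in $h$ equals (up to sign) a product $\prod_{\omega}(\text{something})$ which, after grouping the matrices $\omega$ by their value of $d$ and summing the geometric-type contributions $\sum_{b} \zeta_d^{-b}$ or products $\prod_b(1-\zeta_d^{-b})$, collapses to a product of values $\Phi_m(1)$ of cyclotomic polynomials over the divisors $m$ of $n$. This is exactly where Lemma~\ref{cyclotimic polynomials} enters: $\prod_{m\mid n, m>1}\Phi_m(1)$ equals $p$ if $n=p^e$ is a prime power and $1$ otherwise, since the only nontrivial $\Phi_m(1)$ are at prime-power $m$ and their product telescopes to the largest prime involved. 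Combining the $\Phi_m(1)$ factors with the unit factors coming from the roots of unity $\zeta_d^{-b}$ (each a unit in $\mathbb{Z}[\zeta_N]$) yields the claimed form: a unit in case (a), and $p\times(\text{unit})$ in case (b).

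The main obstacle I anticipate is the bookkeeping in the leading-coefficient computation: correctly organizing which $\omega\in\Omega(n)$ contribute to the leading power of $q^{-1}$ in $F_n^h(h,h)$, handling the delicate cancellation in the "diagonal" matrices $\bigl(\begin{smallmatrix}\sqrt n & b\\ 0 & \sqrt n\end{smallmatrix}\bigr)$ where the naive $q^{-1}$ terms of $h$ and $h\circ\omega$ annihilate each other so that the next Fourier coefficient $a_1$ (and its Galois conjugate) governs the behavior, and then reassembling the resulting product of sums $\sum_b \zeta_d^{-b}$ and products $\prod_b(1-\zeta_d^{-b})$ into the clean cyclotomic form. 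I would structure this by first treating the prime-power case $n=p^{2e}$ in detail — where $\Omega(n)$ has a transparent description and $\Phi_{p^i}(1)=p$ for each $i$ gives a single factor of $p$ after telescoping — and then reducing the general squarefree-times-square case to a multiplicative combination over the distinct prime divisors, using that $\psi$ and the structure of $\Omega(n)$ are multiplicative in $n$; the multiplicativity lets the unit/prime dichotomy propagate, since a product over two or more distinct primes of factors each of the form $p_i\cdot(\text{unit})$ would only be a unit if... — precisely here one must be careful, and the resolution is that when $n$ has $\ge 2$ prime divisors the cyclotomic factors $\Phi_m(1)$ for composite $m$ are all $1$ and the telescoping kills every prime, leaving only units, which is the content of case (a).
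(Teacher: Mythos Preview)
Your integrality argument for $F_n^h(h,h)\in\mathbb{Z}[\zeta_N][h]$ is essentially the paper's (Galois descent to $\mathbb{Z}[\zeta_N]$-coefficients in the $q$-expansion, then clearing the principal part iteratively using $h=q^{-1}+O(q)$). One small slip: there is no Galois twist in $(h\circ\omega)(z)$; it is simply $h$ with $q$ replaced by $\zeta_d^{\,b}q^{a/d}$, not $h*a$.

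The leading-coefficient computation, however, has a genuine gap. For the diagonal matrices $\omega=\bigl(\begin{smallmatrix}\sqrt n & b\\ 0 & \sqrt n\end{smallmatrix}\bigr)\in\Omega(n)$, primitivity forces $\gcd(b,\sqrt n)=1$, so in particular $b\neq 0$ and $\zeta_{\sqrt n}^{-b}\neq 1$. Hence the $q^{-1}$ terms of $h$ and $h\circ\omega$ do \emph{not} annihilate: the leading term of $h-h\circ\omega$ is $(1-\zeta_{\sqrt n}^{-b})\,q^{-1}$, and the coefficient $a_1$ never enters. The diagonal block therefore contributes exactly
\[
\prod_{\substack{1\le b<\sqrt n\\ (b,\sqrt n)=1}}\bigl(1-\zeta_{\sqrt n}^{-b}\bigr)=\Phi_{\sqrt n}(1)
\]
to the leading coefficient, a \emph{single} cyclotomic value, while every off-diagonal factor contributes only a root of unity. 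The dichotomy (a)/(b) then follows immediately from Lemma~\ref{cyclotimic polynomials} applied to $m=\sqrt n$.

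Your proposed route through $\prod_{m\mid n,\,m>1}\Phi_m(1)$ is also numerically wrong: since $\prod_{m\mid n}\Phi_m(X)=X^n-1$, one has $\prod_{m\mid n,\,m>1}\Phi_m(1)=n$ for every $n$, so this product never telescopes to $p$ or to $1$. The actual argument is cleaner than you anticipate---only $\Phi_{\sqrt n}(1)$ appears, and no multiplicativity or telescoping is needed.
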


\begin{proof}
By Proposition.\ref{modular equation two variants},
we have 
\[
F_n^h(h,h) \in \mathbb{Q}(\zeta_N)[h].
\]
Since $F_n^h(h,h)$ is a polynomial of the Hauptmodul $h$, $F_n^h(h,h)$ has $q$-expansion of the form 
\begin{align*}
  F_n^h(h(z),h(z)) &=    \prod_{\omega\in\Omega(n)}\bigl(h(z)-h(\omega z)\bigr)\\
          &=  \prod_{\bigl(\begin{smallmatrix}
            a & b \\
            0 & d 
          \end{smallmatrix}\bigr)\in\Omega(n)}
          \Bigl\{\bigl(q^{-1}+ \Sigma_{m=1}^\infty a_m q^m)- (\zeta_d^{-b}q^{-a/d}+\Sigma_{m=1}^\infty a_m \zeta_d^{b} q^{am/d}\bigr)\Bigr\},
          \end{align*}
and it follows that its $q$-coefficients are contained in $\mathbb{Z}[\zeta_N]$.
We shall write its $q$-expansion as
\[
F_n^h(h,h) = \sum_{m=-m_0}^{\infty} b_m q^m\;\;(b_m\in\mathbb{Z}[\zeta_N], m_0\in\mathbb{Z}_{>0}, b_{-m_0}\neq0).
\]

Now, since the Hauptmodul $h$ has $q$-expansion of the form $h(q)=q^{-1}+ O(q)$, it can be seen that the leading term of the polynomial $F_n^h(h,h)$ is $b_{-m_0}h(q)^{m_0}$.
If we put $h(q)^{m_0}=q^{-m_0}+\sum_{m>-m_0}c_mq^m~(c_m\in \mathbb{Z}[\zeta_N])$, we have  
\begin{align*}
 F_n(h,h)-b_{-m_0}h(q)^{m_0}  &= b_{-m_0}q^{-m_0}+b_{-m_0+1}q^{-m_0+1}+\cdots \\
&\;\;\;\;\;\;\;\; -b_{-m_0}q^{-m_0} - b_{-m_0}c_{-m_0+1}q^{-m_0+1}-\cdots \\
          &=  (b_{-m_0+1}-b_{-m_0}c_{-m_0+1})q^{-m_0+1}+\cdots.
\end{align*}

Therefore, we can see that
the second leading coefficient of $F_n^h(h,h)$ is $(b_{-m_0+1}-b_{-m_0}c_{-m_0+1})$.
In the same manner, it follows that all the coefficients of $F_n^h(h,h)$ are contained in the ring generated by the $q$-coefficients of $F_n(h,h)$ and $h(z)$, therefore we conclude that the coefficients of polynomial $F_n^h(h,h)$ are contained in $\mathbb{Z}[\zeta_N]$.\\

Now, we calculate the leading coefficients of $F_n^h(h,h)$.
Fix $\omega = \begin{pmatrix}
  a & b \\
  0 & d 
\end{pmatrix}\in\Omega(n)$,
and we write
\[
h-h\circ\omega = \bigl(q^{-1}+ \Sigma_{m=1}^\infty a_m q^m)- (\zeta_d^{-b}q^{-a/d}+\Sigma_{m=1}^\infty a_m \zeta_d^{b} q^{am/d}\bigr).
\]
Then, the leading term of $h-h\circ\omega$ with respect to $q$ is
\begin{equation*}
  \begin{cases}
    q^{-1} &  \text{ ($a<d$),}\\
    -\zeta_d^{-b}q^{-a/d}                 & \text{ ($a>d$),} \\
   (1-\zeta_d^{-b})q^{-1}       & \text{($a=d$).}
  \end{cases}
\end{equation*}

If $a=d$, since $ad=n,(a,b,d)=1$, we get $(a,b)=(d,b)=(\sqrt{n},b)=1$, therefore we note that $b\neq0$. Thus, we can write the leading term of $F_n(h,h)=\prod_{\omega\in\Omega(n)}(h-h\circ\omega)$ as follows :
\[
\prod_{1\leq b < d=\sqrt{n},(b,\sqrt{n})=1}(1-\zeta_{\sqrt{n}}^{-b})q^{-1} \cdot \prod_{ad=n,a>d , 0\leq b <d}q^{-1} \cdot \prod_{ad=n,a<d, 0\leq b <d}-\zeta_d^{-b}q^{-a/d} .
\]

By Lemma \ref{cyclotimic polynomials},  the most left term of  this product is $1$ (case (a)) or $p$ (case (b)).
We have seen that the leading coefficient of the $F_n^h(h,h)$ coincides with the leading coefficient of this product,
which completes the proof.

\end{proof}

\subsection{Proof of results}
In this section, we will give a proof of the main result of this paper (Theorem \ref{main thm}).
So far we have studied the coefficients of the generalized modular polynomials $F_n^h(X,X)$.
Here, we will prove that the CM value of the Hauptmodul $h$ is a solution of the modular equation $F_n^h(X,X)=0$. 
Since we know that $F_n^h(X,Y)$ satisfies modular equations 
\[
F_n^h(h(z),h(\omega z)) =0 \;\;\;(\forall \omega\in\Omega(n)),
\]
it is sufficient to find a matrix $\omega\in\Omega(n)$ such that $h(\omega\tau) = h(\tau)$ for a CM point $\tau$.
\begin{lemma}[{\cite[Lemma.2.2]{Chen-Yui}}]\label{matrix lemma}
Let $\rho=\bigl( \begin{smallmatrix}
  a & b\\
  c & d
\end{smallmatrix}\bigr) \in \text{M}_2(\mathbb{Z})$ be an integer matrix with 
gcd$(a,b,c,d)=1$, and $m$ denote the determinant of $\rho$. Let $N\geq 1$ be an integer. Suppose that $gcd(a,N)=1 \; \text{and} \; N|c$. 
Then there exist matrices $\gamma\in\Gamma_0(N)$ and $\omega\in\Omega(m)$ such that $\rho =\gamma\omega$.
\end{lemma}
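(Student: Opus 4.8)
The plan is to obtain $\gamma$ by a $\mathbb{Z}$-row reduction of $\rho$ that keeps track of the congruence modulo $N$. Equivalently, I would construct $\gamma^{-1}\in\Gamma_0(N)$ so that $\gamma^{-1}\rho$ is upper triangular; then $\det(\gamma^{-1}\rho)=m$, and since left multiplication by a matrix in $GL_2(\mathbb{Z})$ preserves the gcd of the entries of an integer matrix, the hypothesis $\gcd(a,b,c,d)=1$ guarantees that
\[
\gamma^{-1}\rho=\begin{pmatrix} a' & b' \\ 0 & d'\end{pmatrix}\qquad\text{with}\qquad a'd'=m,\quad \gcd(a',b',d')=1 .
\]
So the only thing left after that is to normalize this upper-triangular matrix into $\Omega(m)$ and to check that the normalization can be pushed back into $\gamma$ without leaving $\Gamma_0(N)$.

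First I would construct $\gamma^{-1}$. If $a=0$ then $\gcd(a,N)=N=1$, $\Gamma_0(N)=SL_2(\mathbb{Z})$, and the statement reduces to the classical decomposition of an integral matrix as an $SL_2(\mathbb{Z})$-matrix times an element of $\Omega(m)$; so assume $a\neq 0$. Set $\delta=\gcd(a,c)$ and write $a=\delta a_1$, $c=\delta c_1$ with $\gcd(a_1,c_1)=1$. Vanishing of the $(2,1)$-entry of $\gamma^{-1}\rho$ forces the bottom row of $\gamma^{-1}$ to be a multiple of $(c_1,-a_1)$, so I take
\[
\gamma^{-1}=\begin{pmatrix} e & f \\ c_1 & -a_1\end{pmatrix},
\]
with $e,f\in\mathbb{Z}$ chosen so that $-ea_1-fc_1=1$ (possible by B\'ezout, since $\gcd(a_1,c_1)=1$). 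Then $\det\gamma^{-1}=1$, the $(2,1)$-entry of $\gamma^{-1}\rho$ equals $c_1 a-a_1 c=0$, and — the key point — $N\mid c_1$: since $\delta\mid a$ and $\gcd(a,N)=1$ we have $\gcd(\delta,N)=1$, so $N\mid c=\delta c_1$ gives $N\mid c_1$. Hence $\gamma^{-1}\in\Gamma_0(N)$, and $\gamma^{-1}\rho$ is upper triangular.

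Finally I would normalize. Writing $\gamma^{-1}\rho=\bigl(\begin{smallmatrix} a' & b' \\ 0 & d'\end{smallmatrix}\bigr)$ as above (we may assume $m\geq 1$, as $\Omega(m)$ is otherwise undefined), the relation $a'd'=m>0$ shows $a'$ and $d'$ have the same sign, so after multiplying on the left by $-I\in\Gamma_0(N)$ if necessary we may assume $a',d'>0$. Left multiplication by $\bigl(\begin{smallmatrix} 1 & k \\ 0 & 1\end{smallmatrix}\bigr)\in\Gamma_0(N)$ sends $b'$ to $b'+kd'$ and leaves $a',d'$ and the gcd condition unchanged, so a suitable $k$ yields $0\le b'<d'$; call the result $\omega\in\Omega(m)$. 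Both normalizing factors lie in the group $\Gamma_0(N)$, so absorbing them on the right of $\gamma$ keeps $\gamma\in\Gamma_0(N)$, and $\rho=\gamma\omega$, as required.

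The argument is elementary throughout; I expect the only delicate point to be the divisibility bookkeeping in the middle step, i.e.\ verifying that $N$ divides the lower-left entry of $\gamma^{-1}$. This is precisely where both hypotheses $\gcd(a,N)=1$ and $N\mid c$ are used, through the coprimality of $\gcd(a,c)$ with $N$.
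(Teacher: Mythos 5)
Your proof is correct: the paper states this lemma only as a citation to Chen--Yui and gives no argument of its own, and your row-reduction to Hermite normal form---with the key observation that $\gcd(\gcd(a,c),N)=1$ forces $N\mid c_1$, so the reducing matrix lies in $\Gamma_0(N)$---is exactly the standard proof of this decomposition. The normalization step and the preservation of $\gcd(a',b',d')=1$ under left multiplication by $GL_2(\mathbb{Z})$ are both handled correctly, so nothing is missing.
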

By the above lemma, for a CM point $\tau\in\mathbb{H}$, if we get a such matrix $\rho \in\text{M}_2(\mathbb{Z})$ with determinant $n$ satisfying
$\rho\tau=\tau$,
then we get 
\[
h(\tau) = h(\rho\tau) = h(\gamma\omega\tau) = h(\omega\tau)
\]
since the Hauptmodul $h$ is $\Gamma_0(N)$-invariant.
Thus we only need to find a integer matrix $\rho$ with square determinant acts trivially to $\tau$.\\

We first consider the case that the CM point $\tau\in\mathbb{H}$ is an algebraic integer.
\begin{proposition} \label{algebraic integer points}
  Let $K$ be an imaginary quadratic field, $\mathcal{O}_K$ be the ring of integers of $K$, and $\tau_0\in \mathcal{O}_K\cap \mathbb{H}$.
  Let $h$ be a Hauptmodul for a genus-zero group $\Gamma\supset \Gamma_0(N)$. Suppose that the $q$-coefficients of $h$ are contained in some cyclotomic integer ring.
  Then the CM value $h(\tau_0)$ is an algebraic integer.
\end{proposition}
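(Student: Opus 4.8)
The plan is to follow the classical strategy for the $j$-function (as in \cite{Chen-Yui}): produce an integer matrix fixing $\tau_0$, turn it into an element of some $\Omega(n)$ via Lemma~\ref{matrix lemma}, and then substitute the CM point into a diagonal generalized modular equation whose coefficients are controlled by Proposition~\ref{coefficients modular equations}. After enlarging $N$ (by the same reduction indicated before Proposition~\ref{symmetric polynomial}) we may assume the $q$-coefficients of $h$ lie in $\mathbb{Z}[\zeta_N]$.

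The heart of the matter is to exhibit a matrix $\rho\in\mathrm{M}_2(\mathbb{Z})$ with coprime entries, fixing $\tau_0$, of square determinant $n$ coprime to $N$, and satisfying the congruence hypotheses of Lemma~\ref{matrix lemma}. Since $\tau_0$ is an algebraic integer, its minimal polynomial over $\mathbb{Z}$ is monic, say $\tau_0^2+B\tau_0+C=0$ with $B,C\in\mathbb{Z}$ and $B^2-4C<0$, and $L:=\mathbb{Z}+\mathbb{Z}\tau_0$ is the order $\mathbb{Z}[\tau_0]$. Multiplication by $\alpha\in\mathbb{Z}[\tau_0]$ preserves $L$, so in the basis $(\tau_0,1)$ it is given by a matrix $\rho_\alpha\in\mathrm{M}_2(\mathbb{Z})$ with $\det\rho_\alpha=N_{K/\mathbb{Q}}(\alpha)$ and $\rho_\alpha\tau_0=\tau_0$ under the fractional linear action. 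Taking $\alpha=\alpha_0^2$ with $\alpha_0=d_0+N\tau_0$ and $\gcd(d_0,N)=1$ makes $\det\rho_\alpha=N(\alpha_0)^2$ a perfect square coprime to $N$ (because $N(\alpha_0)\equiv d_0^2\bmod N$); moreover the $\tau_0$-coefficient of $\alpha_0^2$ is divisible by $N$, which forces $N\mid c$ and $a\equiv d\equiv d_0^2\bmod N$ — in particular $\gcd(a,N)=1$ — when we write $\rho_\alpha=\bigl(\begin{smallmatrix}a&b\\c&d\end{smallmatrix}\bigr)$. Dividing $\rho_\alpha$ by its content $g=\gcd(a,b,c,d)$ gives the desired $\rho$: one checks that $g$ is coprime to $N$ and that each prime factor of $g$ divides $B^2-4C$, so all of the above is preserved. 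Finally, $d_0$ can be chosen so that $n=\det\rho$ has at least two distinct prime divisors — a nonconstant polynomial does not take only prime-power values, and one checks the extra prime factors are not lost when dividing by the content — placing us in case~(a) of Proposition~\ref{coefficients modular equations}.

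With $\rho$ at hand, Lemma~\ref{matrix lemma} yields $\gamma\in\Gamma_0(N)$ and $\omega\in\Omega(n)$ with $\rho=\gamma\omega$, so that $h(\tau_0)=h(\rho\tau_0)=h(\gamma\omega\tau_0)=h(\omega\tau_0)$ by $\Gamma_0(N)$-invariance of $h$. Since $n$ is a square coprime to $N$, Proposition~\ref{modular equation two variants} gives $F_n^h\bigl(h(z),h(\omega z)\bigr)=\prod_{\omega'\in\Omega(n)}\bigl(h(\omega z)-h(\omega' z)\bigr)=0$ as meromorphic functions on $\mathbb{H}$; evaluating at $z=\tau_0$ (where $h$ and every $h\circ\omega'$ are holomorphic) and using $h(\omega\tau_0)=h(\tau_0)$ gives $F_n^h\bigl(h(\tau_0),h(\tau_0)\bigr)=0$. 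Thus $h(\tau_0)$ is a root of the one-variable polynomial $F_n^h(X,X)\in\mathbb{Z}[\zeta_N][X]$ of Proposition~\ref{coefficients modular equations}, which is nonconstant because $F_n^h(h,h)=\prod_{\omega'}(h-h\circ\omega')$ has a pole at the cusp (as in the leading-term computation there), and whose leading coefficient is a unit of $\mathbb{Z}[\zeta_N]$ by case~(a). Dividing by this unit produces a monic polynomial over $\mathbb{Z}[\zeta_N]$ having $h(\tau_0)$ as a root, so $h(\tau_0)$ is integral over $\mathbb{Z}[\zeta_N]$ and hence an algebraic integer.

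I expect the construction of $\rho$ — arranging simultaneously that its determinant is a perfect square coprime to $N$ with at least two prime factors, while keeping the congruence conditions $\gcd(a,N)=1$, $N\mid c$ after removing the content — to be the only real difficulty; everything afterwards is a formal consequence of the results already established. Should forcing case~(a) prove awkward for some $\tau_0$, one can instead work with prime-power orders $n=p^e$ (case~(b)): each such equation only shows $h(\tau_0)$ is integral over $\mathbb{Z}[\zeta_N][1/p]$, but carrying this out for two distinct primes $p\neq p'$ coprime to $N$ and intersecting the two integral closures still yields integrality over $\mathbb{Z}[\zeta_N]$.
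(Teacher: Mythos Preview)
Your proof is correct and follows the paper's overall strategy --- build an integer matrix fixing $\tau_0$ with square determinant prime to $N$, apply Lemma~\ref{matrix lemma}, and plug into the diagonal equation of Proposition~\ref{coefficients modular equations} --- but the construction of that matrix is genuinely different. The paper chooses $(c_l,d_l)$ so that the norm $Q(c_l,d_l)$ is a \emph{prime} $l$ (invoking the Dirichlet--Weber theorem that a primitive positive definite binary form represents infinitely many primes), squares the resulting $\rho_l$, and then proves a dedicated primitivity lemma (Lemma~\ref{mugennkouka}) ensuring $\rho_l^2$ has coprime entries for infinitely many such $l$; this lands directly in case~(b) for two distinct primes and finishes by the intersection $\frac{1}{l}\mathbb{O}\cap\frac{1}{p}\mathbb{O}=\mathbb{O}$. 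You instead take $\alpha_0=d_0+N\tau_0$ with no constraint on its norm, square, and absorb any non-primitivity by dividing out the content $g$, observing that $\rho_\alpha=\bigl(\begin{smallmatrix}d-Bc & -Cc\\ c & d\end{smallmatrix}\bigr)$ so $g=\gcd(c,d)$, and that every odd prime in $g$ divides the discriminant $B^2-4C$. This is more elementary: it avoids the analytic input on primes represented by forms and replaces Lemma~\ref{mugennkouka} by a one-line discriminant bound on the content, at the cost of no longer controlling which case of Proposition~\ref{coefficients modular equations} fires --- hence your bifurcation into ``case~(a) finishes outright, else fall back to case~(b) for two primes,'' the fallback being exactly the paper's endgame. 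Two spots are a bit brisk and deserve an extra line in the final write-up: the content claim at $p=2$ (harmless, since you only need the prime support of $g$ to lie in a fixed finite set independent of $d_0$), and the existence of the required $d_0$'s, which follows from the classical fact that a nonconstant integer polynomial has infinitely many prime divisors among its values, so one may pick $d_0$ making $N(\alpha_0)$ divisible by any prescribed prime outside that finite set.
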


\begin{proof}
 Let us denote by $Q[a,b,c]$ a quadratic form $ax^2+bxy+cy^2$,
  and we consider a quadratic form $Q[\text{N}(\tau_0)N^{2},\text{Tr}(\tau_0)N,1]$,
  here N$(\tau_0)$ and Tr$(\tau_0)$ is the norm and trace of the algebraic integer $\tau_0$ respectively.
  Since the quadratic form $Q$ is primitive and positive definite, it represents infinitely many primes (cf.~\cite{Cox}).
  Thus we can take a prime $l$ not dividing $N$ so that there exist integers $c_l$, $d_l$, and 
  \[
   Q(c_l,d_l)= \text{N}(\tau_0)N^{2}c_l^2+\text{Tr}(\tau_0)Nc_ld_l+d_l^2= l.
  \]
  We now fix such a prime $l$.
  If we assume that $d_l^2$ and $N$ have a common divisor greater than $1$, it must be prime $l$, and this contradicts with $l\mathrel{\not |} N$.
  Therefore we have
  $(d_l^2,N)=1$.\\

  Next, we define a matrix with integer entries
  \[
  \rho _l
   :=\begin{pmatrix}
     c_lN\text{Tr}(\tau_0)+d_l & -c_lN\text{N}(\tau_0) \\
   c_lN & d_l
   \end{pmatrix}.
  \]
  By definition,
  it follows that $\rho_l\tau_0=\tau_0$, and $\rho_l$ has determinant $Q(c_l,d_l)=l$.
  Here, we consider
  \[
  \rho_l^2 =  \begin{pmatrix}
  (c_lN\text{Tr}(\tau_0)+d_l)^2-c_l^2N^{2}\text{N}(\tau_0) & -c_l^2N^{2}\text{Tr}(\tau_0)\text{N}(\tau_0)-2c_ld_lN\text{N}(\tau_0)\\
  c_l^2N^{2}\text{Tr}(\tau_0)+2c_ld_lN & -c_l^2N^{2}\text{N}(\tau_0)+d_l^2
\end{pmatrix},
  \]
and it is clear that $\rho_l^2\tau_0=\tau_0$ and det$(\rho_l^2)=l^2$.
Also, the lower left entry of $\rho_l^2$ is divisible by $N$, and the upper left entry is coprime to $N$, since $(d_l^2,N)=1$.
Furthermore, since det$(\rho_l^2)=l^2$ and $l$ is a prime, if the four entries of $\rho_l^2$ have a non-trivial
common divisor, it must be the prime number $l$.\\

Although it is not clear that whether this $\rho_l^2$ satisfy the conditions of Lemma \ref{matrix lemma}, in other words whether $\rho_l^2$ is primitive, 
  we now claim that we can take prime number $l$ so that the matrix $\rho_l^2$ is primitive :

\begin{lemma}\label{mugennkouka}
  Let $p$ be a prime number represented by the quadratic form $Q$ above,and write $Q(c_p,d_p)=p$.
  Then there exist infinitely many prime numbers $p$ such that $\rho_p^2$ is a primitive matrix.
\end{lemma}

\begin{proof}
 Remind that there exist infinitely many prime numbers that are represented by the quadratic form $Q[\text{N}(\tau_0)N^{2},\text{Tr}(\tau_0)N,1]$.
  Assume that the matrix $\rho_p^2$ is not primitive i.e. its entries have a common divisor $p$, hence $\rho_p^2\in \text{M}_2(p\mathbb{Z})$.
  Then, as an element of $\text{M}_2(\mathbb{F}_l)$, we have $\rho_l^2=0$ and its trace is $0$ in $\mathbb{F}_p$ :
  \[
  \text{tr}(\rho_p) = c_p N \text{Tr}(\tau_0)+2d_p \equiv 0 \mod p.
  \]
 Since $Q(c_p,d_p)= \text{N}(\tau_0)N^{2}c_p^2+\text{Tr}(\tau_0)Nc_pd_p+d_p^2=p$, at least one of $c_p$ and $d_p$ is not divided by $p$.
 Now, we can assume that $p$ divides neither $2$, nor $N$, nor $\text{Tr}(\tau_0)$ because only finitely many primes divide these numbers.
Then, it follows that neither $c_p$ nor $d_p$ is divisible by $p$,
 and we get an equality
 \[
  d_p \equiv (-2)^{-1}c_pN\text{Tr}(\tau_0) \mod l.
  \]
Consequently, by $Q(c_p,d_p)= p\equiv 0 \mod p$, we get the following :
\begin{align*}
0 &\equiv  \text{N}(\tau_0)N^{2}c_p^2+\text{Tr}(\tau_0)Nc_pd_p+d_p^2  \mod p\\
    &\equiv  \text{N}(\tau_0)N^{2}c_p^2+\text{Tr}(\tau_0)Nc_p(-2)^{-1}c_pN\text{Tr}(\tau_0) +(-2)^{-2}N^2c_p^2\text{Tr}(\tau_0) \mod p\\
    &\equiv  c_p^2N^2(\text{N}(\tau_0)+(-2)^{-1}\text{Tr}(\tau_0)+(-2)^{-2}\text{Tr}(\tau_0)) \mod{p}\\
    &\equiv  c_p^2N^2(-2)^{-2}(4\text{N}(\tau_0)-\text{Tr}(\tau_0)) \mod{p}.
    \end{align*}
  Remind that we assume that $p$ divides none of $c_p$, $N$, and $2$. Therefore, we conclude that if the matrix $\rho_p^2$ is not primitive for a prime $p$, 
  then, with finitely many exceptions, the following equality
  \[
  4\text{Tr}(\tau_0)-\text{Nr}(\tau_0)^2 \equiv 0\mod p
  \]
  holds.
  Since only finitely many primes divide $ 4\text{Tr}(\tau_0)-\text{Nr}(\tau_0)^2$,
  it turns out that there are only finitely many primes $p$ such that the matrix $\rho_p^2$ is not primitive, which proves the lemma.
\end{proof}

By this lemma, there exist infinitely many primes $l$ such that $\rho_l^2$ satisfies the conditions of Lemma.\ref{matrix lemma}.
Hence, fix such a prime number $l$, and we can write
\[
\rho_l^2 = \gamma\omega 
\]
for some $\gamma\in \Gamma_0(N)$ and $ \omega \in \Omega(l^2)$.
Now we have $h(\tau_0)= h(\rho_l^2\tau_0)=h(\gamma\omega\tau_0)=h(\omega\tau_0)$,
this yields that the CM value $h(\tau_0)$ is a root of $F_{l^2}^h(X,X)=0$ :
\[
F_{l^2}^h(h(\tau_0),h(\tau_0))= F_{l^2}^h(h(\tau_0),h\circ\omega(\tau_0)) = 0.
\]
Remind that the polynomial $F_{l^2}^h(X,X)\in \mathbb{Z}[\zeta_N][X]$ has the leading coefficient which is a unit in $\mathbb{Z}[\zeta_N]$ except for a $l$-factor (Proposition \ref{coefficients modular equations}(b)).
Therefore it follows that $l\cdot h(\tau_0)$ is an algebraic integer. 
  In other words, let us denote by $\mathbb{O}$ the set of all algebraic integers, we get
  \[
  h(\tau_0)\in \frac{1}{l}\mathbb{O}.
  \]
  By Lemma \ref{mugennkouka}, we can take another prime $p$, and apply this argument again to obtain $ h(\tau_0)\in \frac{1}{p}\mathbb{O}$.
 Therefore we conclude that 
\[
h(\tau_0)\in \frac{1}{l}\mathbb{O}\cap \frac{1}{p}\mathbb{O} = \mathbb{O},
\]
i.e. $h(\tau_0)$ is an algebraic integer.
\end{proof}

Now, we prove the main results of this paper.

\begin{theorem}\label{main thm}
   Let $\Gamma$ be a discrete subgroup of $SL_2(\mathbb{R})$ of genus-zero,
   and suppose that $\Gamma$ contains a congruence subgroup $\Gamma_0(N)$ for some $N\geq1$.
   Let us denote by $h$ the Hauptmodul for the genus-zero group $\Gamma$,
   and suppose that all the $q$-coefficients of $h$ are contained in the ring of integers of some cyclotomoic field. 
  Let $\tau\in\mathbb{H}$ be a root of $az^2+bz+c=0~(a,b,c\in\mathbb{Z}, (a,N)=1)$, then the CM value $h(\tau)$ is an algebraic integer.
\end{theorem}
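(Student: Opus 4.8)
The plan is to reduce the general CM point $\tau$ to the already-treated case of an algebraic integer CM point handled in Proposition~\ref{algebraic integer points}. Write $\tau$ for a root of $az^2+bz+c=0$ with $a,b,c\in\mathbb{Z}$ and $\gcd(a,N)=1$; without loss of generality assume $\gcd(a,b,c)=1$. The natural candidate for an associated algebraic-integer point is $\tau_0 := a\tau$, which satisfies $\tau_0^2 + b\tau_0 + ac = 0$, so $\tau_0\in\mathcal{O}_K\cap\mathbb{H}$ for the imaginary quadratic field $K=\mathbb{Q}(\tau)$. I would like to transport the CM value $h(\tau)$ to $h(\tau_0)$ via an element of $\Gamma$, using that $\Gamma\supset\Gamma_0(N)$ and that $h$ is $\Gamma_0(N)$-invariant.

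The key step is to find a matrix in $\mathrm{M}_2(\mathbb{Z})$ connecting $\tau$ and $a\tau$ whose lower-left entry is divisible by $N$ and whose upper-left entry is coprime to $N$, so that Lemma~\ref{matrix lemma} applies. The scaling matrix $\sigma = \left(\begin{smallmatrix} a & 0 \\ 0 & 1\end{smallmatrix}\right)$ has $\sigma\tau = a\tau = \tau_0$ but determinant $a$, not a square; moreover one must be careful that $h$ is not $SL_2(\mathbb{Z})$-invariant, so one cannot simply move along $\Omega(a)$. Instead I would argue as follows: first establish that $h(\tau_0)$ is an algebraic integer by Proposition~\ref{algebraic integer points} applied to the imaginary quadratic integer $\tau_0 = a\tau$; then show $h(\tau)$ differs from $h(\tau_0)$ only by an integrality-preserving relation. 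Concretely, one repeats the construction in the proof of Proposition~\ref{algebraic integer points} but starting from the quadratic form $Q = Q[aN^2\cdot(\text{something}),\ldots]$ attached to $\tau$ directly: since $\tau$ satisfies $a\tau^2+b\tau+c=0$, the matrix $\rho_l := \left(\begin{smallmatrix} c_lNb' + d_l & -c_lN c' \\ c_lN a' & d_l\end{smallmatrix}\right)$ built from a prime $l = Q(c_l,d_l)$ represented by the primitive positive-definite form $Q[a'N^2, b'N, c']$ (with $a',b',c'$ the appropriately scaled coefficients so that $\rho_l\tau=\tau$) fixes $\tau$ and has determinant $l$; the hypothesis $\gcd(a,N)=1$ is exactly what guarantees $\gcd(d_l^2,N)=1$ after choosing $l\nmid N$, hence the upper-left entry of $\rho_l^2$ is coprime to $N$ and the lower-left is divisible by $N$. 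Then Lemma~\ref{mugennkouka}, whose proof never used that $\tau_0$ was an algebraic integer beyond the form $Q$ being primitive positive-definite, yields infinitely many primes $l$ for which $\rho_l^2$ is primitive.

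Given such an $l$, Lemma~\ref{matrix lemma} writes $\rho_l^2 = \gamma\omega$ with $\gamma\in\Gamma_0(N)$ and $\omega\in\Omega(l^2)$, so $h(\tau) = h(\rho_l^2\tau) = h(\gamma\omega\tau) = h(\omega\tau)$ by $\Gamma_0(N)$-invariance; hence $h(\tau)$ is a root of $F_{l^2}^h(X,X) = 0$. By Proposition~\ref{coefficients modular equations}(b), $F_{l^2}^h(X,X)\in\mathbb{Z}[\zeta_N][X]$ has leading coefficient of the form $l\times u$ with $u\in\mathbb{Z}[\zeta_N]^\times$, so $l\cdot h(\tau)$ is an algebraic integer, i.e. $h(\tau)\in\frac{1}{l}\mathbb{O}$. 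Repeating with a second prime $p$ from Lemma~\ref{mugennkouka} gives $h(\tau)\in\frac{1}{l}\mathbb{O}\cap\frac{1}{p}\mathbb{O} = \mathbb{O}$, completing the proof.

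The main obstacle I anticipate is bookkeeping the coprimality conditions when $\tau$ is not itself an algebraic integer: one must set up the quadratic form and the matrix $\rho_l$ so that (i) $\rho_l\tau = \tau$ exactly, (ii) the form is primitive and positive definite so that infinitely many primes are represented, and (iii) $\gcd(a,N)=1$ feeds through to $\gcd(\text{upper-left of }\rho_l^2,\ N) = 1$. Once the form and matrix are correctly normalized, Lemma~\ref{mugennkouka} and the integrality argument of Proposition~\ref{algebraic integer points} carry over essentially verbatim; alternatively, if one prefers to avoid rerunning that argument, one can instead directly invoke Proposition~\ref{algebraic integer points} for $\tau_0 = a\tau$ and then separately produce a single determinant-$l^2$ primitive matrix taking $\tau$ to a $\Gamma_0(N)$-translate of $\tau$, but the cleanest exposition is to redo the $\rho_l$ construction with general $(a,b,c)$ from the start.
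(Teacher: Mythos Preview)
Your main line—rerunning the $\rho_l$ construction of Proposition~\ref{algebraic integer points} directly for $\tau$, with the form attached to $(a,b,c)$ in place of $(\mathrm{N}(\tau_0),\mathrm{Tr}(\tau_0),1)$—is correct and does yield the theorem. The paper, however, proceeds differently: it keeps Proposition~\ref{algebraic integer points} as a black box and uses one further modular equation to pass from $\tau_0=a\tau$ back to $\tau$. After multiplying the equation $a\tau^2+b\tau+c=0$ by suitable integers so that $a$ becomes a square coprime to $N$ with at least two prime factors, one has $\tau=\omega_0\tau_0$ for $\omega_0=\bigl(\begin{smallmatrix}1&0\\0&a\end{smallmatrix}\bigr)\in\Omega(a)$, and the generalized modular equation of order $a$ gives $F_a^h(h(\tau_0),h(\tau))=0$; since $F_a^h(X,Y)$ is monic in $Y$ with coefficients in $\mathbb{Z}[\zeta_N][X]$, this exhibits $h(\tau)$ as integral over $\mathbb{Z}[\zeta_N][h(\tau_0)]\subset\mathbb{O}$. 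Your route is more direct and avoids the artificial scaling of $a$; the paper's route is more modular, invoking Proposition~\ref{algebraic integer points} once and then a single off-diagonal modular equation rather than reproving Lemma~\ref{mugennkouka} in the general setting. One small correction to your bookkeeping: $\gcd(a,N)=1$ is not what forces $\gcd(d_l^2,N)=1$; the relation $d_l^2\equiv l\pmod N$ coming from the shape of $Q$ already gives this once $l\nmid N$, independently of $a$. In fact your direct argument never uses the hypothesis $\gcd(a,N)=1$, whereas the paper's reduction genuinely needs it so that the order-$a$ modular equation $F_a^h$ is available.
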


\begin{proof}
  Suppose $a\neq 1$.
  By multiplying integers to both sides of $a\tau^2+b\tau+c=0$ if needed, we can assume that $a$ is a square which has two or more prime divisors, and is coprime to $N$.
Now, $\tau$ is not necessarily an algebraic integer, but $a\tau =:\tau_0$ is.
For this CM point $\tau_0$,
we can write
\[
\tau = \omega_0\tau_0,\;\; \omega_0= \begin{pmatrix}
   1 & 0 \\
   0 & a\end{pmatrix}\in\Omega(a) .
   \]
Thus, we get
\[
F_a^h(h(\tau_0),h(\tau))=F_a^h(h(\tau_0),(h\circ\omega_0)(\tau_0)) =0,
\]
and by the above assumption of the integer $a$, it follows that $(h\circ\omega_0)(\tau_0)(=h(\tau))$ is integral over $\mathbb{Z}[\zeta_N][h(\tau_0)]$.
By Proposition \ref{algebraic integer points}, we can see that $h(\tau_0)$ is an algebraic integer,
and therefore, we conclude that $h(\tau)=h(\omega_0\tau_0)$ is integral over $\mathbb{Z}$, which proves the theorem.
\end{proof}

It is known that there exist exactly 6486 moonshine-type (see Definition.\ref{moonshine-type}) discrete subgroups of $SL_2(\mathbb{R})$ which have genus-zero, and that their Hauptmoduln have 
cyclotomic integer $q$-coefficients (cf.~\cite{Cummins},\cite{Gannon_Hauptmodul}).
We also note that 171 of these are the McKay-Thompson series in monstrous moonshine.
Hence we have the following corollary :

\begin{corollary} \label{moonshine-type_cor}
  Let $\Gamma~(\supset \Gamma_0(N))$ be a genus-zero moonshine-type discrete subgroup of $SL_2(\mathbb{R})$, and $h$ be the Hauptmodul for $\Gamma$. Let $\tau$ as the above theorem.
  Then the CM value $h(\tau)$ is an algebraic integer.
  In particular, for any McKay-Thompson series in monstrous moonshine $T_g$, its CM value $T_g(\tau)$ is an algebraic integer.
\end{corollary}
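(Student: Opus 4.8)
The plan is to obtain Corollary \ref{moonshine-type_cor} as an immediate consequence of Theorem \ref{main thm}, the work having been done once one checks that every genus-zero moonshine-type group meets its hypotheses. So the proof I have in mind is essentially a verification of hypotheses, organized in three short steps.

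First I would dispose of the group-theoretic hypothesis. By Definition \ref{moonshine-type}, a moonshine-type discrete subgroup $\Gamma\subset SL_2(\mathbb{R})$ contains $\Gamma_0(N)$ for some $N>0$; combined with the genus-zero assumption, $\Gamma$ is then of exactly the kind treated in Theorem \ref{main thm}, and the congruence condition $(a,N)=1$ imposed on the defining polynomial $az^2+bz+c$ of $\tau$ is the same one appearing there. Second I would supply the coefficient hypothesis, which is the one input not proved in the present paper: by the classification and tabulation of Cummins and Gannon (\cite{Cummins}, \cite{Gannon_Hauptmodul}), there are exactly $6486$ genus-zero moonshine-type discrete subgroups of $SL_2(\mathbb{R})$, and for each of them the $q$-coefficients of the normalized Hauptmodul lie in the ring of integers of a cyclotomic field. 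With both hypotheses in hand, Theorem \ref{main thm} applied to $\Gamma$ and $h$ yields that $h(\tau)$ is an algebraic integer.

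For the final assertion I would recall that monstrous moonshine realizes each McKay--Thompson series $T_g$ as the normalized Hauptmodul of a genus-zero group $\Gamma_g$ that contains $\Gamma_0(N)$ with $N$ divisible by the order of $g$; in particular each $\Gamma_g$ is moonshine-type, and indeed these account for $171$ of the $6486$ groups above. Hence the case just treated applies and $T_g(\tau)$ is an algebraic integer.

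The only step that is not pure bookkeeping is the appeal to the cyclotomic-integrality of the $q$-coefficients of moonshine-type Hauptmoduln, which is imported from the cited literature rather than reproved here; I do not expect any genuine obstacle beyond correctly quoting that fact and matching up the parameter $N$ (using, as in Section 3, that one may enlarge $N$ to a multiple so that both $\Gamma\supset\Gamma_0(N)$ and the coefficients lie in $\mathbb{Q}(\zeta_N)$ simultaneously).
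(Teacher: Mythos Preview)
Your proposal is correct and matches the paper's own argument essentially verbatim: the paper derives the corollary by citing \cite{Cummins},\cite{Gannon_Hauptmodul} for the fact that the $6486$ genus-zero moonshine-type Hauptmoduln (of which $171$ are McKay--Thompson series) have cyclotomic integer $q$-coefficients, and then invokes Theorem~\ref{main thm}. There is nothing to add.
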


\section{Replicable functions}\label{replicable}
In this section, we study the CM values of completely replicable functions, as a application in the proof of the main result.
For more detail of (complete) replicability, we refer the reader to \cite{More on}, \cite{replicable introduction}.
The concept of replicability relates to certain (infinite) product expression (cf.~\cite{Carnahan 2}, \cite{multiplicative symmmetry}), and to the Schwarzian derivatives (cf.~\cite{Schwarz}).

\subsection{Definition of replicability}
All the McKay-Thompson series in monstrous moonshine has a special combinatorial property
called \emph{complete replicability}.
The notion of replicability can be defined for a formal $q$-series
(i.e.~we do not assume its convergence and modular invariance).\\

Let 
\[
f(q) = q^{-1} + \sum_{m=1}^\infty a_m q^m \;\;(a_m\in \mathbb{C})
\]
be a formal $q$-series.
For an integer $n\geq 1$, there exists a unique polynomial $P_{f,n}(X)\in\mathbb{C}[X]$
satisfying
\[
P_{n,f}(f(q)) = q^{-n} + O(q) \;\;\; \text{as}\;\;\; q \rightarrow 0.
\]
The polynomials $P_{n,f}(X)$ is called the \emph{Faber polynomial} for $f$ of order $n$.
Each $P_{n,f}$ is a monic polynomial of degree $n$, and it can be described in terms of the coefficients of the formal $q$-series $f(q)$.
For example, we have
\[
P_{1,f}(X)=X,~ P_{2,f}(X)=X^2-2a_1,~ P_{3,f}(X)=X^3-3a_1X-3a_2.
\]
For the formal series $f(q)$ and its Faber polynomials, we define
a double sequence $\{h_{m,n}\}$ by
\[
P_{n,f}(f(q)) = q^{-n} + n \sum_{m=1}^\infty h_{m,n} q^m,
\]
and it can be seen that the double sequence ${h_{m,n}}$ is symmetric.
We now define the notions of replicability for formal series.
\begin{definition}[replicability]
  A formal $q$-series $f(q) = q^{-1} + \sum_{m=1}^\infty a_m q^m$ is said to be \emph{replicable} if the equality $h_{m,n}=h_{r,s}$ holds whenever $\text{gcd}(m,n)=\text{gcd}(r,s)$ and $\text{lcm}(m,n)=\text{lcm}(r,s)$.
\end{definition}

It is known that this defining condition can be written in terms of Hecke-like operator (cf.~\cite{replicable introduction}).

\begin{proposition}
  A formal series $f(q)$ is replicable if and only if the following conditions are satisfied :\\
  For all $n\geq1$ and its positive divisors $a$ of $n$, 
  there exists a formal series $f^{(a)}$ of the form
  \[
  f^{(a)}(q) = q^{-1} + O(q) \;\;\; \text{as}\;\;\; q \rightarrow 0
  \]
  such that the equalities
  \[
  P_{n,f}(f(q)) = \sum_{\substack{ad=n\\ 0\leq b<d}} f^{(a)} (\zeta_d^b q^{a/d})
  \]
  hold. 
\end{proposition}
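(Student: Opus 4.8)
The plan is to reduce both implications to a single explicit identity between the double sequence $\{h_{m,n}\}$ and the Fourier coefficients of the candidate replicates. Write $f^{(a)}(q)=q^{-1}+\sum_{k\ge1}a_k^{(a)}q^k$. Starting from the geometric-sum identity $\sum_{b=0}^{d-1}\zeta_d^{bj}=d$ if $d\mid j$ and $0$ otherwise, one evaluates the right-hand side of the asserted equation termwise: for every $n\ge1$,
\[
\sum_{\substack{ad=n\\ 0\le b<d}} f^{(a)}(\zeta_d^b q^{a/d})
 \;=\; q^{-n}+\sum_{ad=n} d\sum_{\ell\ge1} a_{d\ell}^{(a)}\,q^{a\ell},
\]
the leading term $q^{-n}$ arising only from $a=n$, $d=1$, with all other negative powers and all fractional powers of $q$ cancelling. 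Extracting the coefficient of $q^m$ for $m\ge1$ and comparing with $P_{n,f}(f(q))=q^{-n}+n\sum_{m\ge1}h_{m,n}q^m$ yields the master formula
\[
h_{m,n}\;=\;\sum_{a\mid\gcd(m,n)}\frac1a\,a_{mn/a^2}^{(a)},
\]
in which the indices $mn/a^2$ are genuine positive integers because $a$ divides both $m$ and $n$.

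For the forward implication, suppose such replicates $f^{(a)}$ exist. Writing $g=\gcd(m,n)$ and $L=\operatorname{lcm}(m,n)$, we have $mn=gL$, hence $mn/a^2=gL/a^2$, and $a$ runs over the divisors of $g$; so the master formula exhibits $h_{m,n}=\sum_{a\mid g}\frac1a a_{gL/a^2}^{(a)}$ as a function of the pair $(g,L)$ alone. Thus $h_{m,n}=h_{r,s}$ whenever $\gcd(m,n)=\gcd(r,s)$ and $\operatorname{lcm}(m,n)=\operatorname{lcm}(r,s)$, which is exactly replicability.

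For the converse, suppose $f$ is replicable, so that $h_{m,n}$ depends only on $(\gcd(m,n),\operatorname{lcm}(m,n))$; write $h_{m,n}=:H(\gcd(m,n),\operatorname{lcm}(m,n))$, a well-defined function on pairs $(g,L)$ with $g\mid L$. I would then define the coefficients $a_k^{(a)}$ recursively in $a$: specializing the master formula to $m=a$, $n=ak$ (for which $\gcd=a$ and $\operatorname{lcm}=ak$) forces
\[
\frac1a\,a_k^{(a)}\;=\;H(a,ak)-\sum_{\substack{e\mid a\\ e\ne a}}\frac1e\,a_{a^2k/e^2}^{(e)},
\]
where the right-hand side involves only replicates $f^{(e)}$ for proper divisors $e$ of $a$, already constructed (the base case $a=1$ gives $a_k^{(1)}=H(1,k)=h_{1,k}=a_k$, i.e.\ $f^{(1)}=f$). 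Setting $f^{(a)}(q):=q^{-1}+\sum_{k\ge1}a_k^{(a)}q^k$, one verifies — running the opening computation backwards — that the master formula holds for all $(m,n)$ with these coefficients: for $g=\gcd(m,n)$, $L=\operatorname{lcm}(m,n)$ one applies the defining relation with $a$ replaced by $g$ and $k$ by the integer $L/g$, obtaining $H(g,L)=\sum_{e\mid g}\frac1e a_{gL/e^2}^{(e)}=\sum_{e\mid g}\frac1e a_{mn/e^2}^{(e)}$. Hence $P_{n,f}(f(q))$ equals the claimed sum for every $n$.

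The step demanding the most care is the converse: one must check that the recursive definition of $a_k^{(a)}$ is unambiguous, and this is precisely where the full force of replicability — not merely the symmetry $h_{m,n}=h_{n,m}$ — is used, since it is what makes $H(g,L)$ well-defined and forces the correction sum $\sum_{e\mid a,\,e\ne a}\frac1e a_{a^2k/e^2}^{(e)}$ to depend only on $a$ and $k$. The remaining points — the cancellation of the spurious powers of $q$ on the right-hand side, the integrality of all indices $mn/a^2$, and the translation between the variables $(m,n)$ and $(g,L)$ — are routine.
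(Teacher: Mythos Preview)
The paper does not supply its own proof of this proposition; it merely states the result and refers the reader to \cite{replicable introduction}, with the subsequent remark that the replicates can be written explicitly in terms of the $h_{m,n}$ and the M\"{o}bius function. So there is nothing to compare against line by line.

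Your argument is correct. The computation of the right-hand side via the orthogonality relation $\sum_{b=0}^{d-1}\zeta_d^{bj}=d\cdot[d\mid j]$ is accurate, and the resulting identity
\[
h_{m,n}=\sum_{a\mid\gcd(m,n)}\frac{1}{a}\,a_{mn/a^{2}}^{(a)}
\]
is exactly the bridge one needs. The forward direction is immediate from it, and your recursive construction in the converse direction is sound: rearranging the defining recursion gives $H(g,gk')=\sum_{e\mid g}\tfrac{1}{e}a_{g^{2}k'/e^{2}}^{(e)}$ for all $g,k'\geq1$, which, upon setting $k'=L/g$, is precisely the master formula for the pair $(g,L)$; replicability then transports it to every $(m,n)$ with the same gcd and lcm. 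Your recursion is the triangular (unrolled) form of the M\"{o}bius inversion alluded to in the paper's remark, so your route is essentially the standard one in the literature, just with the inversion carried out by induction on the divisor lattice rather than by citing the M\"{o}bius formula explicitly.
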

We call these $q$-series $f^{(a)}$, $a$-th \emph{replicates} of $f$,
  and set $f^{(1)}=f$.

\begin{remark}
 It is known that if $f(q) =q^{-1}+ \sum_{m=1}^\infty a_mq^m$ is replicable,
 replicates of $f$ are given explicitly in terms of the double sequence $\{h_{m,n}\}$ and the M\"{o}bius function (cf.~\cite{replicable introduction}). 
\end{remark}

We now consider a stronger property, called \emph{complete replicability}.

\begin{definition}[completely replicability]
  Let $f(q) = q^{-1} + \sum_{m=1}^\infty a_m q^m $ be a formal $q$-series.
  We say $f(q)$ is \emph{completely replicable},
  if for any integers $s,t\geq 1$, the $s$-th replicate $f^{(s)}$ is replicable, and the equalities 
  \[
  \bigl(f^{(s)}\bigr)^{(t)}=f^{(st)}
  \]
  are satisfied.
\end{definition}

\begin{definition}
  Suppose that a $q$-series $f(q) = q^{-1} + \sum_{m=1}^\infty a_m q^m $ is completely replicable. Let $k$ be a positive integer.
  We say $f(q)$ is \emph{completely replicable of order $k$} if for any integer $s\geq 1$, the equality 
  \[
  f^{(s,k)}=f^{(s)}
  \]
  are satisfied.
  If $f(q)$ is of order $k$ for some $k\geq1$, $f(q)$ is said to be \emph{completely replicable of finite order}.
\end{definition}

\begin{example}
  The $J$-function ($=j-744$) is a completely replicable function of order 1.
  We can see this in the following way. Consider the sum
  \[
  s_n:= \sum_{\substack{ad=n\\ 0\leq b<d}} J\Bigl(\frac{az+b}{d}\Bigr),
  \]
  and it turns out to be a modular function for $SL_2(\mathbb{Z})$,
  hence $s_n$ is a polynomial of $J$.
  In addition, $s_n$ has a $q$-expansion of the form $q^{-n}+O(q)$.
  Therefore, we get
  \[
  P_{n,J}\bigl(J(q)\bigr) = s_n,
  \]
  hence $J$ is replicable, and all the replicates are $J$ itself.
  It is shown (cf.~\cite{Kozlov} Theorem 3.11.) that
  the $J$-function is the only completely replicable function of order 1
  which has non-trivial group of symmetries.
  
\end{example}

\begin{example}
  Each of the McKay-Thompson series $T_g$ of monstrous moonshine is a completely replicable function whose order coincides with the order of $g$ as an element of the finite Monster group. 
  Borcherds \cite{Borcherds} proved this fact by using the theory of infinite dimensional Lie algebra,
  which was a key point of his proof of monstrous moonshine.
\end{example}

\subsection{Replicability and Hauptmodul property}

As we have seen in the previous section, all the McKay-Thompson series are completely replicable functions, while they are Hauptmoduln for some genus-zero group.
What is the relationship between replicability and Hauptmodul property?
Conway-Norton \cite{Rational Hauptmodul} proved that Hauptmodul with rational $q$-coefficients for moonshine-type group is replicable.
Conversely, Norton conjectured the following~:
\begin{conjecture}[Norton]
 Suppose a formal $q$-series $f = q^{-1}+\sum_{n=1}^\infty a_nq^n$ is a replicaple, and that it has rational coefficients.
 Then the $q$-series $f$ is either a Hauptmodul a genus-zero group of moonshine-type,
 or $q^{-1}, q^{-1}\pm q$. 
\end{conjecture}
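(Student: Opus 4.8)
Since the statement is a conjecture rather than a theorem, the ``proof proposal'' here is really a program, organized around the characterization of Hauptmoduln by modular equations. The plan is to establish an implication of the form ``$f$ replicable with rational coefficients $\Rightarrow$ there is an integer $N$ such that $f$ satisfies a (generalized) modular equation of order $n$ for every $n\equiv 1\pmod N$'', after which the Cummins--Gannon theorem quoted above forces $f$ to be either of the form $q^{-1}+a_1q$ or a Hauptmodul for a genus-zero moonshine-type group. The bare replicability hypothesis is too weak to feed into that machinery directly, so the first step is to bootstrap from $f$ to its full compatible system of replicates: one would show that a replicable rational $f$ is in fact completely replicable, and completely replicable of finite order --- of order $k$, say --- the integer $k$ being the natural candidate for a divisor of the modulus $N$ above. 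Partial results in this direction, together with a complete classification in the finite-order case, are available in \cite{Kozlov} and \cite{Cummins-Gannon}.

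The technical heart is the passage from the combinatorial relations among the $h_{m,n}$ to genuine modular equations. Here one unwinds the Hecke-operator form of (complete) replicability, $P_{n,f}(f(q))=\sum_{ad=n,\,0\le b<d}f^{(a)}(\zeta_d^b q^{a/d})$ with each replicate $f^{(a)}=q^{-1}+O(q)$, and aims to arrange the family $\{f^{(a)}\}$ against the matrix set $\Omega(n)$ so that the elementary symmetric functions of $\{f^{(a)}(\zeta_d^b q^{a/d})\}_{ad=n,\,0\le b<d}$ become polynomials in $f$ (equivalently in the Galois twist $f*n$, which is trivial here because the coefficients are rational), exactly as the elementary symmetric functions of $\{j\circ\omega\}_{\omega\in\Omega(n)}$ are polynomials in $j$. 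Complete replicability makes $h_{m,n}$ depend only on $\gcd(m,n)$ and $\operatorname{lcm}(m,n)$, and it is precisely this symmetry that one would exploit to force the required polynomial dependence, and hence the existence of the modular polynomial $F_n^f(X,Y)$; rationality of the coefficients is what keeps the twists $f*n$ under control and, ultimately, what should exclude ``exotic'', non-modular replicable series.

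Granting the modular equations, the Cummins--Gannon theorem leaves exactly two cases: a Hauptmodul for a genus-zero moonshine-type group (one of the desired conclusions), or $f=q^{-1}+a_1q$. The second branch is dispatched by hand: computing the first few Faber polynomials and imposing replicability forces $a_1\in\{0,\pm1\}$, so that $f\in\{q^{-1},\,q^{-1}\pm q\}$, which is the exceptional list in the statement. The main obstacle --- and the reason this remains a conjecture --- is the implication ``replicable $\Rightarrow$ satisfies modular equations'': a priori a replicable $q$-series need not converge on $\mathbb{H}$, let alone be invariant under a finite-index subgroup of $SL_2(\mathbb{R})$, so extracting analyticity, a group of symmetries, and commensurability with $SL_2(\mathbb{Z})$ from the purely formal recursion on the Fourier coefficients is exactly the step that has not been carried out in general. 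A secondary difficulty is the initial bootstrap to complete replicability of finite order, without which one cannot even pin down the modulus $N$ appearing in the Cummins--Gannon criterion.
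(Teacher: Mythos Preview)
The paper does not prove this statement: it is explicitly stated as Norton's open \emph{conjecture}, and the surrounding discussion makes clear that the author treats it as unresolved. You correctly recognize this and present a program rather than a proof.

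Your program is well aligned with the circle of ideas the paper itself invokes. The reduction you outline --- bootstrap replicability to complete replicability of finite order, then use Kozlov's result that such functions satisfy modular equations of all orders coprime to $k$, then apply the Cummins--Gannon classification --- is exactly the route the paper takes in Section~4.3 to obtain the unconditional Corollary~\ref{completely replicable} for the restricted class of completely replicable functions of finite order. You also correctly isolate the two genuine obstacles that prevent this from settling the full conjecture: (i) there is no known general passage from bare replicability to complete replicability of finite order, and (ii) without that, one cannot produce the modulus $N$ needed to feed into Cummins--Gannon. The paper says nothing beyond this; it simply sidesteps the conjecture by imposing the stronger hypothesis.

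One minor correction to your endgame: in the degenerate branch $f=q^{-1}+a_1q$, replicability alone with rational coefficients does force $a_1\in\{0,\pm1\}$, but your claim that this follows from ``the first few Faber polynomials'' deserves a line of justification; the paper (via Cummins--Gannon) gives the constraint $a_1^{\gcd(24,k)}=1$ in the finite-order case, which over $\mathbb{Q}$ yields $a_1\in\{0,\pm1\}$ as you say.
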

Therefore, if we assume the Norton's conjecture,
we would get a corollary of Theorem \ref{main thm} that CM-values of replicable functions with rational integer coefficients are algebraic integers, eliminating the degenerate cases.
However, if we restrict ourselves to consider completely replicable $q$-series of finite order, it turned out that we can avoid assuming the above conjecture, by combining the results of \cite{Cummins-Gannon} and \cite{Kozlov}.

\subsection{Complete replicability and modular equations}
Kozlov \cite{Kozlov} studied the relation between complete replicability and modular equations, and proved that completely replicable functions satisfies many modular equations.
To be precise, Kozlov proved the following~:
\begin{proposition}[\cite{Kozlov}~Proposition 4.1.]
  Let $f(q)$ be a completely replicable function of order $k$.
  Then $f(q)$ satisfies a modular equation of all order $n$ coprimes to $k$.
\end{proposition}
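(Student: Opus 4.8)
The plan is to realize the modular polynomial as the obvious product and reduce the whole statement to deciding which $q$–series are polynomials in $f$. Since $f\circ\omega$ and $f\circ\omega'$ differ already in their leading Laurent coefficients whenever $\omega\neq\omega'$ in $\Omega(n)$, the only possible modular polynomial of order $n$ is the one determined by $F_n^f(f(q),Y)=\prod_{\omega\in\Omega(n)}(Y-(f\circ\omega)(q))$, so the entire problem is to show that every elementary symmetric function of the family $\{(f\circ\omega)(q)\}_{\omega\in\Omega(n)}$ lies in $R:=\mathbb{C}[f]\subset\mathbb{C}((q))$. Granting this, $F_n^f(X,Y)\in\mathbb{C}[X,Y]$ is well defined via the isomorphism $\mathbb{C}[X]\cong R$ and is monic of degree $\psi(n)=\#\Omega(n)$ in $Y$; the symmetry $F_n^f(X,Y)=F_n^f(Y,X)$, and hence also degree $\psi(n)$ in $X$, follows exactly as for the $j$–function from the fact that $(n\omega^{-1})\omega$ acts trivially on $\mathbb{H}$, so that the involution $\omega\mapsto n\omega^{-1}$ of $\Omega(n)$ interchanges the two arguments.

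Complete replicability of order $k$ is used in exactly one place: it forces $f^{(m)}=f$ for every $m$ coprime to $k$ (it suffices to check this for $m$ prime, the rest following by iteration). This is precisely what makes an \emph{untwisted} modular equation — rather than the Galois–twisted generalized equation of Cummins--Gannon — available. With the twist gone, the replication identity collapses, for each $m$ coprime to $k$, to
\[
  P_{m,f}\bigl(f(z)\bigr)=\sum_{\substack{ad=m\\ 0\le b<d}}f\!\left(\tfrac{az+b}{d}\right)=\sum_{g^{2}\mid m}\ \sum_{\omega\in\Omega(m/g^{2})}(f\circ\omega)(z),
\]
the last equality coming from sorting the reduced upper–triangular matrices of determinant $m$ by the gcd $g$ of their entries and using $f\circ(g\omega')=f\circ\omega'$. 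Möbius inversion over the square divisors of $n$ now yields $S_m:=\sum_{\omega\in\Omega(m)}(f\circ\omega)=\sum_{g^{2}\mid m}\mu(g)\,P_{m/g^{2},f}(f)\in R$ for every $m$ coprime to $k$; in particular the coefficient of $Y^{\psi(n)-1}$ is already in $R$. More generally, for $i$ coprime to $k$ one substitutes this collapsed formula for $P_{i,f}(f)$ and uses that $\Omega(n)\cdot\Omega(i)$ decomposes as a disjoint union of systems $e\cdot\Omega(ni/e^{2})$ ($e\mid\gcd(n,i)$) with suitable multiplicities; since $f\circ(e\omega)=f\circ\omega$, this rewrites $U_n(P_{i,f}(f)):=\sum_{\omega\in\Omega(n)}P_{i,f}\bigl((f\circ\omega)(z)\bigr)$ as a $\mathbb{Z}$–linear combination of such $S_m$'s, whence $U_n(P_{i,f}(f))\in R$ for all $i$ coprime to $k$.

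To pass from the first symmetric function to all of them I would then induct on the number of prime factors of $n$. For coprime $n_1,n_2>1$, substituting $z\mapsto\omega_1 z$ (with $\omega_1\in\Omega(n_1)$) into the order-$n_2$ modular equation shows that every $f\circ\omega$ ($\omega\in\Omega(n_1n_2)$) is a root of $F_{n_2}^f(X,\cdot)$ taken at the root $X=f\circ\omega_1$ of $F_{n_1}^f(f,\cdot)$, so that $\prod_{\omega\in\Omega(n_1n_2)}(Y-f\circ\omega)=\operatorname{Res}_X\!\bigl(F_{n_1}^f(f,X),\,F_{n_2}^f(X,Y)\bigr)\in R[Y]$; this reduces the problem to prime powers. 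For $n=p^{e}$ (with $p\nmid k$) the same device applied to $\Omega(p^{e-1})\cdot\Omega(p)$ — which decomposes as $\Omega(p^{e})$ together with copies of $p\cdot\Omega(p^{e-2})$, the latter contributing, via $f\circ(p\omega)=f\circ\omega$, a power of the monic polynomial $F_{p^{e-2}}^f(f,Y)$ — reduces, by exact division over $R$ by that factor, to the single case $n=p$.

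The case $n=p$ prime (with $p\nmid k$) is where I expect the real work to lie. One must show that \emph{all} symmetric functions of the $p+1$ series $\{f(pz)\}\cup\{f((z+b)/p)\}_{0\le b<p}$ belong to $R$ — equivalently, by Newton's identities and the Faber basis, that $U_p(P_{i,f}(f))\in R$ for $1\le i\le p+1$. For $i$ coprime to $k$ this is the preceding paragraph; the obstruction is the remaining $i$ (those divisible by some prime of $k$), for which $P_{i,f}(f)$ has to be unfolded by its \emph{full} replication formula — with the honest replicates $f^{(a)}$ now present — and one must show, using that each $f^{(a)}$ is itself completely replicable of order dividing $k$ and so carries its own collapsed replication relations at every level coprime to $k$, that the $\Omega(p)$–sums of the resulting terms reorganize back into the $S_m$'s already under control. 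Carrying out this bookkeeping for the matrices $\eta\omega$ ($\eta$ coming from the replication formula, $\omega\in\Omega(p)$) and verifying the cancellations is the combinatorial core of the argument, and is precisely the point at which replicability of $f$ alone would not suffice.
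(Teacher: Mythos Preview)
The paper does not prove this proposition; it simply cites Kozlov's thesis. So there is no paper-proof to compare against, and I assess your outline on its own merits.

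Your strategy is sound: reduce existence of $F_n^f$ to showing that the elementary symmetric functions of $\{f\circ\omega\}_{\omega\in\Omega(n)}$ lie in $R=\mathbb{C}[f]$; pass to power sums via Newton's identities and then to the Faber basis; and exploit the replication identities. The reductions to prime $n$ (via resultants for coprime factors, and via the decomposition of $\Omega(p)\cdot\Omega(p^{e-1})$ for prime powers) are correct. You assert the uniform-multiplicity decompositions of products $\Omega(\cdot)\cdot\Omega(\cdot)$ without proof; they do hold, but note that they must be checked at the level of upper-triangular matrices modulo \emph{left} translation only, since $f$ is merely a $q$-series satisfying $f(z+1)=f(z)$ and carries no $SL_2(\mathbb{Z})$-invariance a priori --- the usual coset-theoretic arguments are not available.

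The genuine gap is the base case $n=p$. You correctly identify that for $i\le p+1$ divisible by a prime of $k$ one must invoke the full replication formula with honest replicates $f^{(a)}$, but you stop at ``carrying out this bookkeeping'' without doing it. In fact this step is cleaner than you anticipate and requires no further unfolding of the $f^{(a)}$'s. For any $i$ with $p\nmid i$ one has the closed identity
\[
U_p\bigl(P_{i,f}(f)\bigr)=P_{ip,f}(f)\in R,
\]
obtained by writing both sides via their replication formulas and using $f^{(ap)}=f^{(a)}$ (valid since $(p,k)=1$). Concretely, since $(i,p)=1$ the divisors of $ip$ are exactly $a$ and $ap$ for $a\mid i$; the $A=ap$ summands of $P_{ip,f}(f)$ match the $\omega=\bigl(\begin{smallmatrix}p&0\\0&1\end{smallmatrix}\bigr)$ contribution to $U_p(P_{i,f}(f))$, while the $A=a$ summands match the $\omega=\bigl(\begin{smallmatrix}1&c\\0&p\end{smallmatrix}\bigr)$ contributions via the bijection $(b,c)\mapsto ac+bp\bmod ip/a$ from $[0,i/a)\times[0,p)$ to $\mathbb{Z}/(ip/a)$ (immediate from $(a,p)=(p,i/a)=1$). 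Since the only $i\le\psi(p)=p+1$ with $p\mid i$ is $i=p$, and $p$ is coprime to $k$ and hence already covered by your collapsed formula, this closes the base case completely.
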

By utilizing Theorem 1.3 of \cite{Cummins-Gannon}, which claims that if a $q$-series satisfies a modular equation of any order $n\equiv 1 \mod k$, then $f(q)$ is either a Hauptmodul for a moonshine-type group or some degenerate type, 
we can get the following~:
\begin{proposition}
  Let $f(q)= q^{-1}+ \sum_{m=1}^\infty a_m q^m$ be a completely replicable function of order $k$,
  and suppose that all the coefficients are algebraic integers.
  Then, $f(q)$ is either a Hauptmodul of some moonshine-type group which contains congruence subgroup $\Gamma_0(N)$, or a degenerate type $q^{-1}+\zeta q$ where
  $\zeta^{\text{gcd}(24,k)}=1$.
\end{proposition}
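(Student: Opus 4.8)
The plan is to bootstrap from the two results recalled immediately above. First, since $f$ is completely replicable of order $k$, Proposition~4.1 of \cite{Kozlov} gives that $f$ satisfies a modular equation of order $n$ for every $n$ coprime to $k$; in particular it satisfies one for every $n \equiv 1 \pmod{k}$, as any such $n$ is automatically coprime to $k$. Feeding this into Theorem~1.3--1.4 of \cite{Cummins-Gannon} then yields the dichotomy: $f$ is either a Hauptmodul for some genus-zero moonshine-type group, or is of the degenerate shape $f(q) = q^{-1} + a_1 q$. (If one prefers to invoke the \emph{generalized} modular equation form of that theorem stated above instead of its ordinary-modular-equation form, one first checks that the Fourier coefficients of a completely replicable function of order $k$ lie in a cyclotomic field, enlarges $N$ to a multiple of $k$, and notes that for $n \equiv 1 \pmod{N}$ the Galois element $*n$ is trivial, so that an ordinary modular equation of order $n$ is automatically a generalized one.)

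In the first alternative there is nothing more to prove: by Definition~\ref{moonshine-type} a moonshine-type group contains $\Gamma_0(N)$ for some $N \geq 1$, so $f$ is exactly of the first claimed type, and the standing hypothesis that the coefficients of $f$ are algebraic integers is then automatic, since all $6486$ genus-zero moonshine-type Hauptmoduln have cyclotomic-integer coefficients (the fact quoted in the proof of Theorem~\ref{main thm}).

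It remains to identify $a_1$ in the degenerate case $f(q) = q^{-1} + a_1 q$. Here I would first compute the replicates by hand: combining the Hecke-operator description of replicability with the elementary identity $\sum_{b=0}^{d-1}\zeta_d^{jb} = 0$, valid whenever $j \not\equiv 0 \pmod{d}$, an induction on $n$ shows that every contribution coming from a factorization $n = ad$ with $d > 1$ cancels, so that $P_{n,f}(f) = q^{-n} + a_1^{n} q^{n}$ and $f^{(a)} = q^{-1} + a_1^{a} q$ for every $a \geq 1$. That $f$ has \emph{finite} order $k$ then forces $a_1$ to be a root of unity, compatibly with (and in fact implying) the algebraic-integrality hypothesis; the sharper conclusion $a_1 = \zeta$ with $\zeta^{\gcd(24,k)} = 1$ comes from pinning down exactly what ``order $k$'' demands of the periodic family $(f^{(a)})_a$. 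The remaining sub-case $a_1 = 0$ is $f = q^{-1}$.

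Everything up to and including the dichotomy is a direct concatenation of the two quoted theorems and presents no real difficulty; the delicate point, and the one I would treat as the technical heart of the argument, is that last sharpening. The formal replicability identities only constrain the order of $a_1$ relative to $k$, so extracting the extra divisibility $a_1^{24} = 1$ genuinely requires new input --- either a careful bookkeeping of the eta-type multiplier carried by the degenerate family, or an appeal to the classification of degenerate completely replicable functions of finite order in Kozlov's thesis \cite{Kozlov}.
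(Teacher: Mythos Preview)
Your approach is correct and coincides with the paper's: the paper does not give a standalone proof but simply records the proposition as the combination of Kozlov's Proposition~4.1 (modular equations of every order coprime to $k$) with Theorem~1.3 of \cite{Cummins-Gannon}, precisely the concatenation you carry out. Your hand computation of the replicates in the degenerate case and your discussion of how the sharpening $\zeta^{\gcd(24,k)}=1$ must be extracted go beyond what the paper spells out---the paper leaves that refinement entirely to the cited sources---but the overall strategy is identical.
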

Consequently, we get the following result as a corollary of Theorem.\ref{main thm} of the present paper :
\begin{corollary}\label{completely replicable}
 Let $f(q)$ be a completely replicable function of finite order, with cyclotomic integer $q$-coefficients.
 We assume that $f(q) \neq q^{-1}+ a_1q$.
 Then, there exists an integer $N\geq1$ such that
if $\tau\in\mathbb{H}$ be a CM point satisfying $a\tau^2+b\tau+c=0~~(a,b,c\in\mathbb{Z}, (a,N)=1)$,
 the CM value $f(\tau)$ is an algebraic integer.
\end{corollary}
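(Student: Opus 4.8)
The plan is to deduce the statement from Theorem~\ref{main thm} by first applying the structural classification recalled immediately above. Concretely, the proposition preceding the statement (obtained by combining Kozlov's \cite{Kozlov} result that a completely replicable function of order $k$ satisfies a modular equation of every order coprime to $k$ with Theorem~1.3 of \cite{Cummins-Gannon}) asserts that a completely replicable $f$ of finite order whose coefficients are algebraic integers is either a Hauptmodul for a genus-zero moonshine-type group containing some $\Gamma_0(N)$, or the degenerate series $q^{-1}+\zeta q$ with $\zeta^{\gcd(24,k)}=1$. Since the degenerate series has the form $q^{-1}+a_1q$, the hypothesis $f\neq q^{-1}+a_1q$ forces the first alternative: $f=h_{\Gamma}$ for a genus-zero moonshine-type group $\Gamma\supset\Gamma_0(N)$.

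Next I would take this $N$ (the level produced by the classification) as the integer asserted to exist in the statement. By hypothesis the $q$-coefficients of $f$ lie in the ring of integers of a cyclotomic field, so $f$ satisfies every hypothesis of Theorem~\ref{main thm}: it is the Hauptmodul of a genus-zero group containing $\Gamma_0(N)$ with cyclotomic integer coefficients. Applying that theorem (equivalently Corollary~\ref{moonshine-type_cor}) then gives, for every $\tau\in\mathbb{H}$ which is a root of $az^2+bz+c=0$ with $a,b,c\in\mathbb{Z}$ and $(a,N)=1$, that the CM value $f(\tau)$ is an algebraic integer, which is exactly the claim.

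Since the proof is a concatenation of results already in hand, there is no serious analytic or arithmetic obstacle in this last step; the only points requiring care are bookkeeping. First, one checks that the exceptional series in the classification really is of the shape $q^{-1}+a_1q$ (it is, as $\zeta q$ is a single linear term), so that the single hypothesis $f\neq q^{-1}+a_1q$ eliminates all degenerate cases at once. Second, one should note that the $N$ in the conclusion is the level of the moonshine-type group attached to $f$ and may be larger than the order $k$ of complete replicability; as in the remark preceding Theorem~\ref{main thm}, it can be enlarged further without harm so that it also absorbs the cyclotomic field containing the coefficients, should that be convenient.
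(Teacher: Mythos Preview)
Your proposal is correct and matches the paper's approach exactly: the corollary is stated as an immediate consequence of the preceding classification proposition together with Theorem~\ref{main thm} (or equivalently Corollary~\ref{moonshine-type_cor}), and your argument carries out precisely this deduction. The additional bookkeeping remarks you include about the form of the degenerate case and the choice of $N$ are accurate and go slightly beyond what the paper makes explicit.
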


\section{Some speculations}

\subsection{Ring class fields}
In the work of \cite{Chen-Yui}, it is also showed that the CM values of fundamental McKay-Thompson series
generate the ring class fields of imaginary quadratic fields :

\begin{theorem}[{\cite[Theorem.3.7.5]{Chen-Yui}}]
  Let $T_g$ be a fundamental McKay-Thompson series for a genus zero subgroup $\Gamma_g$ which contains $\Gamma_0(N)$,
  and $\tau\in\mathbb{H}$ be a root of $az^2+bz+c=0~(a,b,c\in\mathbb{Z},\;(a,N)=1,\;b^2-4ac=:d<0)$.
  For the imaginary quadratic field $K=\mathbb{Q}(\tau)$ and its order $\mathcal{O}$ of discriminant $N^2d$, 
  we denote by $L_\mathcal{O}$  the ring class field of $K$ with respect to the order $\mathcal{O}$.
  Then,
  \[
   L_\mathcal{O} = K(T_g(\tau)).
  \]
\end{theorem}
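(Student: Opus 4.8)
The plan is to prove the two inclusions $K(T_g(\tau))\subseteq L_\mathcal{O}$ and $L_\mathcal{O}\subseteq K(T_g(\tau))$ separately, the essential input being the main theorem of complex multiplication (Shimura's reciprocity law) applied to the CM point of $X_0(N)$ underlying $T_g(\tau)$. First I would pin down this CM point. Since $a\tau^2+b\tau+c=0$ with $(a,N)=1$, the number $N\tau$ is a root of $aX^2+bNX+cN^2$, which — because $(a,N)=1$ and $\gcd(a,b,c)=1$ — is again primitive and has discriminant $N^2d$; hence $E_{N\tau}=\mathbb{C}/(\mathbb{Z}+\mathbb{Z}N\tau)$ has complex multiplication by the order $\mathcal{O}$ of discriminant $N^2d$, and classical CM theory gives $K(j(N\tau))=L_\mathcal{O}$. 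Likewise $E_\tau$ has CM by the order $\mathcal{O}'\supseteq\mathcal{O}$ of discriminant $d$, so $K(j(\tau))=L_{\mathcal{O}'}\subseteq L_\mathcal{O}$, and therefore $K(j(\tau),j(N\tau))=L_\mathcal{O}$. The point $x=(E_\tau,\langle 1/N\rangle)\in X_0(N)$ is the CM point in question; under the covering $X_0(N)\to X_{\Gamma_g}$ induced by $\Gamma_0(N)\subseteq\Gamma_g$ it maps to the point where $T_g$ takes the value $T_g(\tau)$.

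For the inclusion $K(T_g(\tau))\subseteq L_\mathcal{O}$: since $\Gamma_g\supseteq\Gamma_0(N)$, the Hauptmodul $T_g$ lies in the function field $\mathbb{C}(j,j_N)$ of $X_0(N)$, where $j_N:=j(N\,\cdot\,)$, so $T_g=R(j,j_N)$ for a rational function $R$ with coefficients in the cyclotomic field $\mathbb{Q}(\zeta_k)$ containing the $q$-coefficients of $T_g$. Evaluating at $\tau$ gives $T_g(\tau)\in\mathbb{Q}(\zeta_k)\!\cdot\!K(j(\tau),j(N\tau))=L_\mathcal{O}(\zeta_k)$. One then shows, via Shimura reciprocity, that $T_g(\tau)$ is fixed by $\mathrm{Gal}(\overline{K}/L_\mathcal{O})$ and hence lies in $L_\mathcal{O}$: here the hypotheses $(a,N)=1$ and ``fundamental'' are used to guarantee that the relevant idele class acts compatibly on $T_g$ and on $x$, the cyclotomic twist of the coefficients of $T_g$ being cancelled by the change of CM point — this is precisely the ``Galois-twisted modular equation'' phenomenon that also underlies the present paper.

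For the reverse inclusion it suffices, as $K(T_g(\tau))\subseteq L_\mathcal{O}$ and $[L_\mathcal{O}:K]=|\mathrm{Cl}(\mathcal{O})|$, to exhibit $|\mathrm{Cl}(\mathcal{O})|$ distinct conjugates of $T_g(\tau)$ over $K$. By the main theorem of CM, $\mathrm{Gal}(L_\mathcal{O}/K)\cong\mathrm{Cl}(\mathcal{O})$ acts on the $|\mathrm{Cl}(\mathcal{O})|$ CM points of $X_0(N)$ in the Galois orbit of $x$; the conjugates of $T_g(\tau)$ are the values of $T_g$ at these points, and the points are pairwise $\Gamma_0(N)$-inequivalent because their $j_N$-coordinates run through the distinct $j$-invariants of the elliptic curves with CM by $\mathcal{O}$. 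Because $T_g$ is a Hauptmodul, it is injective on the points of $X_{\Gamma_g}$, so two of these values coincide exactly when the corresponding CM points are $\Gamma_g$-equivalent; thus it remains to check that no element of $\Gamma_g\setminus\{\pm1\}\Gamma_0(N)$ identifies two of them. This is the crux, and is where ``fundamental'' is indispensable: it forces $\Gamma_g$ to be $\Gamma_0(N)$, possibly extended by Atkin--Lehner involutions $W_e$ ($e\,\|\,N$), and a direct matrix computation — e.g. the Fricke involution $z\mapsto-1/(Nz)$ sends a root of $aX^2+bX+c$ with $(a,N)=1$ to a root of $cN^2X^2-bNX+a$, whose leading coefficient is divisible by $N$, and likewise for each $W_e$ — shows that every such involution carries the set of $\Gamma_0(N)$-classes of CM points of the prescribed type ``$(a,N)=1$, discriminant $d$'' off itself. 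Hence $\Gamma_g$-equivalence is already trivial on this set, the $|\mathrm{Cl}(\mathcal{O})|$ values are distinct, and $[K(T_g(\tau)):K]\ge|\mathrm{Cl}(\mathcal{O})|=[L_\mathcal{O}:K]$, forcing $K(T_g(\tau))=L_\mathcal{O}$. I expect the main obstacle to be exactly this last step: controlling, through the precise structure of $\Gamma_g$ and the genus theory of $\mathcal{O}$, the fibers of $X_0(N)\to X_{\Gamma_g}$ over the Heegner points of the prescribed type (for non-fundamental $T_g$ the statement fails without replacing $\mathcal{O}$ by a smaller order), with the careful bookkeeping of the cyclotomic field $\mathbb{Q}(\zeta_k)$ in Shimura's reciprocity law a secondary technical point.
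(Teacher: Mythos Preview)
This theorem is not proved in the present paper. It appears in Section~5 (``Some speculations'') purely as a citation of a result of Chen--Yui, stated without proof or sketch; immediately afterwards the paper records that Cox--McKay--Stevenhagen \cite{principal moduli} later established the analogue for arbitrary Hauptmoduln with rational $q$-coefficients by means of Shimura's reciprocity, and then poses as an open problem whether something comparable holds for the cyclotomic-coefficient Hauptmoduln treated in the body of the paper---explicitly remarking that Shimura's reciprocity ``can not be used directly in our case''. So there is no argument here for you to compare against.

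For orientation: your approach, built on the main theorem of complex multiplication and Shimura's reciprocity, is the method of \cite{principal moduli}, not of Chen--Yui or of the present paper, both of which proceed instead by constructing and analysing (generalized) modular equations. The step you yourself single out as the crux---controlling the fibres of $X_0(N)\to X_{\Gamma_g}$ over the Heegner orbit via the Atkin--Lehner structure of $\Gamma_g$---is indeed where the content lies, and is precisely the place where Chen--Yui's hypothesis ``fundamental'' does its work; but the paper under review offers no guidance on that point, since it does not attempt the ring-class-field statement at all.
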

After the work of \cite{Chen-Yui}, Cox-McKay-Stevenhagen~\cite{principal moduli} proved that 
CM values of Hauptmoduln with rational $q$-coefficients generate the ring class fields of corresponding imaginary quadratic fields,
by utilizing \emph{Shimura's reciprocity}~\cite{Shimura}.
As we consider modular functions whose $q$-coefficients are not always contained in
$\mathbb{Q}$,
Shimura's reciprocity can not be used directly in our case.
It is reasonable to ask whether the CM values which we have considered in this paper generate 
certain important extensions of the corresponding imaginary quadratic number fields.

\subsection{Characters of certain finite sporadic groups}
Monstrous moonshine relates the $q$-coefficients of Hauptmoduln to the character values of the infinite dimensional representation of the Monster group.
Such a moonshine phenomenon has been generalized to other finite (simple) groups (cf.~\cite{umbral},\cite{Duncan},\cite{Moonshine}). 
In \emph{umbral moonshine}, for example, the role of Hauptmoduln is replaced by \emph{mock modular forms} (cf.~\cite{mock modular}), which is a generalization of modular forms.\\

In recent years, the relation between monstrous moonshine and other moonshine have been studied,
and this gives rise to a relation between the (trace of) singular values of Hauptmoduln and the character values of certain sporadic groups, in particular the dimensions of their irreducible representations (cf.~\cite{Duncan},\cite{clasical umbral},\cite{Zagier}).\\

To give an interesting example, for the $j$-function, we have
\[
 85995 =\frac{1}{\sqrt{5}} \biggl( j\Bigl(\frac{1+\sqrt{-15}}{4}\Bigr) + j\Bigl(\frac{1+\sqrt{-15}}{2}\Bigl) \biggr),
\]
where 85995 is the dimension of an irreducible representation on $\mathbb{Q}(\sqrt{-15})$ of the Thompson sporadic group.
Also, let $j_3(z)$ be the Hauptmodul for $\Gamma_0(3)$,
we have 
\[
16 = \frac{1}{\sqrt{-11}} \biggl(j_3\Bigl(\frac{-1+\sqrt{-11}}{6}\Bigr) - j_3\Bigl(\frac{1+\sqrt{-11}}{6}\Bigr) \biggr),
\]
where 16 is the dimension of an irreducible representation on $\mathbb{Q}(\sqrt{-11})$ of the Mathieu sporadic groups $M_{11}$ and $M_{12}$. 
It would be interesting to study such relations for the CM values of more general class of Hauptmoduln which are proved to be algebraic integers in the present paper.

\bibliographystyle{amsplain}

\end{document}